\theoremstyle{definition}
\newtheorem{definition}{Definition}
\newtheorem{example}[definition]{Example}
\theoremstyle{plain}
\newtheorem{theorem}{Theorem}
\newtheorem{proposition}[definition]{Proposition}
\newtheorem{lemma}[definition]{Lemma}
\newtheorem{corollary}[definition]{Corollary}
\title{Theory and applications of \\linearized multivariate skew polynomials}
\author{Umberto Mart{\'i}nez-Pe\~{n}as \thanks{umberto.martinez@unine.ch}}
\affil{Institute of Computer Science and Mathematics, \\ University of Neuch{\^a}tel, Switzerland}
\date{}
\begin{document}

\maketitle

\begin{abstract}
In this work, linearized multivariate skew polynomials over division rings are introduced. Such polynomials are right linear over the corresponding centralizer and generalize linearized polynomial rings over finite fields, group rings or differential polynomial rings. Their natural evaluation is connected to the remainder-based evaluation of free multivariate skew polynomials. It is shown that P-independent sets are those given by right linearly independent sets when partitioned into conjugacy classes. Hence finitely generated P-closed sets correspond to lists of finite-dimensional right vector spaces, extending Lam and Leroy's results on univariate skew polynomials. It is also shown that products of free multivariate skew polynomials translate into coordinate-wise compositions of linearized multivariate skew polynomials, which in turn translate into matrix products over the corresponding centralizers. Later, linearized multivariate Vandermonde matrices are introduced, which generalize multivariate Vandermonde, Moore and Wronskian matrices. The previous results explicitly give their ranks in general. P-Galois extensions of division rings are then introduced, which generalize classical (finite) Galois extensions. Three Galois-theoretic results are generalized to such extensions: Artin's theorem on extension degrees, the Galois correspondence and Hilbert's Theorem 90.

\textbf{Keywords:} Hilbert's Theorem 90, Lagrange interpolation, linearized polynomials, Moore matrices, skew polynomials, Vandermonde matrices, Wronskian matrices.

\textbf{MSC:} 12E10, 12E15, 16S36.
\end{abstract}

\section{Introduction} \label{sec intro}

Univariate skew polynomials were introduced by Ore in \cite{ore}. A natural definition of evaluation on them, via Euclidean division, was later given by Lam and Leroy in \cite{lam, lam-leroy}. Thanks to this concept of evaluation, Lam and Leroy introduced the notion of \textit{P-independence} of evaluation points in \cite{lam, algebraic-conjugacy}, which in turn gives rise to the concept of \textit{P-closed set} and \textit{P-basis} of a P-closed set. Intuitively, a finite set of evaluation points is P-independent if we may perform Lagrange interpolation on them (see Theorem \ref{th lagrange interpolation}). The main characterization of P-independent sets is also due to Lam and Leroy, and states that P-independent sets are those such that, when partitioned into conjugacy classes \cite[Th. 23]{lam}, each subset of the partition is given by a linearly independent set over the corresponding centralizer \cite[Th. 4.5]{lam-leroy}. In this way, P-closed sets correspond to lists of vector spaces and P-independent sets form a representable matroid \cite{oxley}.

With this characterization at hand, Lam and Leroy explicitly described the ranks of matrices obtained by evaluating (univariate) skew polynomials, which generalize Vandermonde matrices \cite{lam} and are related to Moore matrices \cite{moore} \cite[Lemma 3.51]{lidl} and Wronskian matrices \cite[Def. 1.11]{singer}. Such results were used to prove that skew Reed-Solomon codes are MDS codes \cite{skew-evaluation1} and linearized Reed-Solomon codes are maximum sum-rank distance (MSRD) codes \cite{linearizedRS}. Linearized Reed-Solomon codes have provided PMDS codes with order-optimal finite field sizes \cite{cai-field, gopi-field}, sought after by researchers at Microsoft. Such codes and the MSRD property (critical for such applications) were obtained by an alternative evaluation of skew polynomials \cite[Lemma 24]{linearizedRS}, which turns them into linear maps, recovering linearized polynomial functions \cite{orespecial} \cite[Ch. 3]{lidl} as a particular case. 

In Section \ref{sec main definitions}, we extend such a \textit{linearized} evaluation to (free) \textit{multivariate skew polynomials} \cite{multivariateskew}, thus defining \textit{linearized multivariate skew polynomials}, which generalize linearized polynomials over finite fields \cite{orespecial} \cite[Ch. 3]{lidl}, group rings \cite[p. 104]{lang} and differential polynomials \cite[App. D]{singer}. As in the univariate case \cite[Lemma 24]{linearizedRS}, we connect this type of evaluation with the arithmetic evaluation of multivariate skew polynomials based on Euclidean divisions (Theorem \ref{theorem evaluation of skew as map}). Later in Section \ref{sec linearized P-closed sets}, we will use this connection and Lagrange interpolation \cite[Th. 4]{multivariateskew} to extend the important results \cite[Th. 23]{lam} and \cite[Th. 4.5]{lam-leroy} to the multivariate case (Theorems \ref{theorem linearized version of P-closed in one conj} and \ref{theorem lin P-closed sets in several conj classes}).

In Section \ref{sec skew and lin polynomial arithmetic}, we turn to the arithmetic of the introduced linearized multivariate skew polynomials. We show that: (1) On one conjugacy class, products of multivariate skew polynomials are mapped onto compositions of linearized multivariate skew polynomials (Theorem \ref{th skew pol is composition}) and matrix products (Theorem \ref{th composition is matrix multiplication}); and (2) Over several conjugacy classes, products of multivariate skew polynomials decompose into coordinate-wise compositions of linearized multivariate skew polynomials and products of matrices (Theorem \ref{th product decompositions}).

In Section \ref{sec generalizations Vandermonde, Moore, Wronskian}, we define linearized multivariate Vandermonde matrices, which generalize multivariate Vandermonde, Moore and Wronskian matrices over division rings. We connect them to skew multivariate Vandermonde matrices \cite{multivariateskew} and provide an explicit formula for their ranks (Theorem \ref{th rank of lin vandermonde matrix}). 

In Section \ref{sec generalizations galois and derivations}, we define P-Galois extensions of division rings, which generalize classical Galois extensions. We generalize three classical results in Galois theory: (1) Artin's Theorem \cite[Th. 14]{artin-lectures} on extension degrees; (2) The Galois correspondence \cite[Th. 16]{artin-lectures}; and (3) Hilbert's Theorem 90 \cite[Th. 21]{artin-lectures} \cite[Th. 90]{hilbert}.

As an additional application, two new families of error-correcting codes could be defined and studied using the results in this manuscript, with the idea of generalizing classical Reed-Muller codes. First, skew Reed-Muller codes could be defined using remainder-based evaluations (Definition \ref{def skew evaluation}). Second, linearized Reed-Muller codes could be defined using linearized evaluations (Definition \ref{def lin evaluation}). These two code families would generalize, respectively, skew and linearized Reed-Solomon codes \cite{skew-evaluation1, linearizedRS}. Furthermore, both families would generalize classical Reed-Muller codes. They would also intersect with the recent Reed-Muller type codes from \cite{AugotSkewRM, skewRM}, but they would constitute different code families in general. A complete study of such skew and linearized Reed-Muller codes is left open.

\textbf{Notation:} $ \mathcal{A}^{m \times n} $ denotes the set of $ m \times n $ matrices over $ \mathcal{A} $. We also denote $ \mathcal{A}^n = \mathcal{A}^{n \times 1} $. $ \mathcal{B}^\mathcal{A} $ denotes the set of all maps $ \mathcal{A} \longrightarrow \mathcal{B} $. We will fix a division ring $ \mathbb{F} $ throughout the manuscript. Given a ring $ \mathcal{R} $, we denote by $ (\mathcal{A}) \subseteq \mathcal{R} $ the left ideal generated by $ \mathcal{A} \subseteq \mathcal{R} $. We denote by $ \langle \mathcal{B} \rangle^L $ and $ \langle \mathcal{B} \rangle^R $ the left and right $ \mathbb{F} $-linear vector space generated by $ \mathcal{B} $, respectively. We also denote by $ \dim^L_\mathbb{F} $ and $ \dim^R_\mathbb{F} $ left and right dimensions over $ \mathbb{F} $. Rings are not assumed to be commutative, but all of them will be assumed to have a multiplicative identity, and all ring morphisms preserve multiplicative identities.

\section{Main definitions and the natural evaluation maps} \label{sec main definitions}

In this section, we define linearized multivariate skew polynomials. We extend the notion of centralizers from the univariate case \cite[Eq. (3.1)]{lam-leroy} to the multivariate case, and we show that linearized multivariate skew polynomials are right linear over the corresponding centralizer. Finally, we translate the natural evaluation on linearized multivariate skew polynomials to evaluations by Euclidean division \cite[Def. 9]{multivariateskew}.
%

\begin{definition}[\textbf{Free multivariate skew polynomials \cite{multivariateskew}}] \label{def free skew polynomials}
Given a ring morphism $ \sigma : \mathbb{F} \longrightarrow \mathbb{F}^{n \times n} $, we say that $ \delta : \mathbb{F} \longrightarrow \mathbb{F}^n $ is a $ \sigma $-derivation if
$$ \delta(a+b) = \delta(a) + \delta(b) \quad \textrm{and} \quad \delta(ab) = \sigma(a) \delta(b) + \delta(a) b, $$
for all $ a,b \in \mathbb{F} $. Let $ x_1, x_2, \ldots, x_n $ be $ n $ pair-wise distinct variables, and denote by $ \mathcal{M} $ the free (non-commutative) monoid on such variables, whose elements are called monomials and whose monoid identity is denoted by $ 1 $. The \textit{free skew polynomial ring} over $ \mathbb{F} $ in the variables $ x_1, x_2, \ldots, x_n $ with morphism $ \sigma $ and derivation $ \delta $ is the left vector space $ \mathbb{F}[\mathbf{x}; \sigma, \delta] $ with basis $ \mathcal{M} $ and with product given by appending monomials and
\begin{equation}
\mathbf{x} a = \sigma(a) \mathbf{x} + \delta(a),
\label{eq rule defining product}
\end{equation}
for $ a \in \mathbb{F} $, where $ \mathbf{x} = (x_1, x_2, \ldots, x_n)^T \in \mathcal{M}^n $. Therefore, (\ref{eq rule defining product}) is a short form of
\begin{equation}
x_i a = \sum_{j=1}^n \sigma_{i,j}(a) x_j + \delta_i(a),
\label{eq rule defining product expanded}
\end{equation}
for $ i = 1,2, \ldots, n $, where $ \sigma_{i,j} $ and $ \delta_i $ denote the components of $ \sigma $ and $ \delta $, respectively. Each $ F \in \mathbb{F}[\mathbf{x}; \sigma, \delta] $ is called a \textit{skew polynomial} and can be uniquely written as 
\begin{equation}
F = \sum_{\mathfrak{m} \in \mathcal{M} } F_\mathfrak{m} \mathfrak{m},
\label{eq general form of skew polynomial}
\end{equation}
where $ F_\mathfrak{m} \in \mathbb{F} $, for $ \mathfrak{m} \in \mathcal{M} $, which are all zero except for a finite number of them. Define the degree of a monomial $ \mathfrak{m} \in \mathcal{M} $ as its length as a string, and define the degree of a non-zero skew polynomial $ F \in \mathbb{F}[\mathbf{x}; \sigma, \delta] $, denoted by $ \deg(F) $, as the maximum degree of a monomial $ \mathfrak{m} \in \mathcal{M} $ such that $ F_\mathfrak{m} \neq 0 $. We also define $ \deg(0) = - \infty $.
\end{definition}

With the product (\ref{eq rule defining product}), $ \mathbb{F}[\mathbf{x}; \sigma, \delta] $ is a ring and a left vector space over $ \mathbb{F} $ such that
\begin{equation}
\deg(F + G) \leq \max \{ \deg(F), \deg(G) \} \quad \textrm{and} \quad \deg(FG) = \deg(F) + \deg(G),
\label{eq product subadditive on degrees}
\end{equation}
for all $ F,G \in \mathbb{F}[\mathbf{x}; \sigma, \delta] $. In fact, by \cite[Th. 1]{multivariateskew}, all the products satisfying (\ref{eq product subadditive on degrees}) in the set $ \mathbb{F}[\mathbf{x}; \sigma, \delta] $ are given by (\ref{eq rule defining product}). Furthermore, if $ {\rm Id} : \mathbb{F} \longrightarrow \mathbb{F}^{n \times n} $ is the ring morphism given by $ {\rm Id}(a) = a I $, for $ a \in \mathbb{F} $, where $ I \in \mathbb{F}^{n \times n} $ is the $ n \times n $ identity matrix, then $ \mathbb{F}[\mathbf{x}; {\rm Id}, 0] $ is the free conventional polynomial ring in the variables $ x_1, x_2, \ldots, x_n $, as in \cite[Sec. 0.11]{cohn} and \cite[Ex. (1.2)]{lam-book}. To the best of our knowledge, the results in this paper are also new for free conventional polynomial rings over a division ring (the case $ \sigma = {\rm Id} $, $ \delta = 0 $). 

The ring $ \mathbb{F}[\mathbf{x}; \sigma, \delta] $ can also be characterized by the following universal property.

\begin{lemma} \label{lemma universal property}
Let $ \mathcal{R} $ be a ring and a left vector space over $ \mathbb{F} $, whose product is left $ \mathbb{F} $-linear in the first component. Assume that there exist elements $ y_1, y_2, \ldots, y_n \in \mathcal{R} $ satisfying
$$ y_i a = \sum_{j=1}^n \sigma_{i,j}(a) y_j + \delta_i(a), $$
for $ i = 1,2, \ldots, n $, for all $ a \in \mathbb{F} $. Then there exists a unique left $ \mathbb{F} $-linear ring morphism $ \varphi : \mathbb{F}[\mathbf{x}; \sigma, \delta] \longrightarrow \mathcal{R} $ such that $ \varphi(x_i) = y_i $, for $ i = 1,2, \ldots, n $.
\end{lemma}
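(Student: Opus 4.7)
The plan is to build $\varphi$ explicitly on the left $\mathbb{F}$-basis $\mathcal{M}$ and extend by left linearity, after which uniqueness will be immediate. Specifically, I would set $\varphi(1) = 1_\mathcal{R}$ and $\varphi(x_{i_1} x_{i_2} \cdots x_{i_k}) = y_{i_1} y_{i_2} \cdots y_{i_k}$ on each nonempty monomial, extending to all of $\mathbb{F}[\mathbf{x}; \sigma, \delta]$ via the unique representation (\ref{eq general form of skew polynomial}). Any candidate $\varphi'$ sending each $x_i$ to $y_i$ and $1$ to $1_\mathcal{R}$ must, by multiplicativity, satisfy the same formula on every element of $\mathcal{M}$; being left $\mathbb{F}$-linear it then agrees with $\varphi$ on the whole ring.

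The substantive work is showing that $\varphi$ preserves products. The key sublemma is that
$$\varphi(x_i H) = y_i \, \varphi(H)$$
for every $H \in \mathbb{F}[\mathbf{x};\sigma,\delta]$ and every $i$. To prove it, I would expand $H = \sum_\mathfrak{n} c_\mathfrak{n} \mathfrak{n}$ and use the defining rule (\ref{eq rule defining product expanded}) to write
$$x_i H = \sum_\mathfrak{n} \left( \sum_{j=1}^n \sigma_{i,j}(c_\mathfrak{n}) x_j + \delta_i(c_\mathfrak{n}) \right) \mathfrak{n}.$$
Applying $\varphi$ term by term, noting that $\varphi(x_j \mathfrak{n}) = y_j \varphi(\mathfrak{n})$ by construction, and then invoking the hypothesis $y_i c_\mathfrak{n} = \sum_j \sigma_{i,j}(c_\mathfrak{n}) y_j + \delta_i(c_\mathfrak{n})$ in $\mathcal{R}$ collapses the expression to $y_i \sum_\mathfrak{n} c_\mathfrak{n} \varphi(\mathfrak{n}) = y_i \varphi(H)$. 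This is the single place where the two commutation relations are matched, and hence the real heart of the argument.

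Once this sublemma is in hand, iterating it along a monomial $\mathfrak{m} = x_{i_1} \cdots x_{i_k}$ yields $\varphi(\mathfrak{m} H) = y_{i_1} \cdots y_{i_k} \varphi(H) = \varphi(\mathfrak{m}) \varphi(H)$ for every $H$. Writing an arbitrary $F$ as a left $\mathbb{F}$-combination of monomials via (\ref{eq general form of skew polynomial}), left linearity of $\varphi$ together with left linearity of the ring product in its first argument promotes this to $\varphi(FH) = \varphi(F) \varphi(H)$ for all $F, H$, which finishes multiplicativity. The only real obstacle is the sublemma, which crucially needs the hypothesis on $y_1, \ldots, y_n$ to align with the structural rule (\ref{eq rule defining product expanded}); all remaining steps are routine bookkeeping with the left $\mathbb{F}$-module structure and the basis $\mathcal{M}$.
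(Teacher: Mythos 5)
Your proposal is correct: the paper itself leaves this lemma's proof to the reader, and your argument is exactly the expected one, with the essential content correctly isolated in the sublemma $\varphi(x_i H) = y_i\,\varphi(H)$, where the hypothesis on $y_1,\ldots,y_n$ is matched against the defining rule (\ref{eq rule defining product expanded}). The remaining steps (iteration along monomials, extension by left $\mathbb{F}$-linearity using the left-algebra axiom that the product is linear in its first argument, and uniqueness from multiplicativity plus linearity on the basis $\mathcal{M}$) are handled correctly.
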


It was shown in \cite[Th. 2]{skew-class} that all ring morphisms $ \sigma : \mathbb{F} \longrightarrow \mathbb{F}^{n \times n} $ are diagonalizable if $ \mathbb{F} $ is a finite field. However, this is far from the case in general.

\begin{example} \label{ex wild example II}
Let $ \mathbb{F} = \mathbb{F}_4 (z) $, where $ \mathbb{F}_4 $ is the finite field with $ 4 $ elements, $ z $ is transcendental, and let $ \gamma \in \mathbb{F}_4 $ be a primitive element (i.e., $ \mathbb{F}_4 = \{ 0 , 1, \gamma, \gamma^2 \} $). Define
$$ Z = \left( \begin{array}{cc}
0 & z \\
\gamma z & 0
\end{array} \right) \in \mathbb{F}_4 (z)^{2 \times 2} . $$
The unique ring morphism $ \sigma_Z : \mathbb{F}_4 (z) \longrightarrow \mathbb{F}_4 (z)^{2 \times 2} $ such that $ \sigma_Z(z) = Z $ is given by
$$ \sigma_Z(f(z)) = \left( \begin{array}{cc}
\tau (f(z)) & \gamma \partial (f(z)) \\
\gamma^2 \partial (f(z)) & \tau(f(z))
\end{array} \right) \in \mathbb{F}_4 [z]^{2 \times 2}, $$
where $ \tau(f(z)) \in \mathbb{F}_4 [z] $ and $ \partial(f(z)) \in \mathbb{F}_4 [z] $ are formed by the even and odd terms in $ f(\gamma^2 z) \in \mathbb{F}_4 [z] $, respectively, for $ f(z) \in \mathbb{F}_4 [z] $. The subfield of elements in $ \mathbb{F}_4 (z) $ fixed by $ \sigma_Z $ is $ \mathbb{F}_4 (z^6) $. Therefore, $ \sigma_Z $ is neither diagonalizable nor triangulable.
\end{example}

The motivation behind free skew polynomials as in Definition \ref{def free skew polynomials} is that they admit a natural arithmetic evaluation map \cite[Def. 9]{multivariateskew}, which is guaranteed by unique-remainder Euclidean division \cite[Lemma 5]{multivariateskew}. In contrast, \textit{iterated skew polynomial rings}  lack uniqueness for such a type of evaluation \cite[Remark 8]{multivariateskew}.

\begin{definition} [\textbf{Skew evaluation \cite{multivariateskew}}] \label{def skew evaluation}
For $ \mathbf{a} = (a_1, a_2, \ldots, a_n) \in \mathbb{F}^n $ and a skew polynomial $ F \in \mathbb{F} [\mathbf{x} ; \sigma,\delta] $, we define its evaluation on $ \mathbf{a} $, denoted by $ F(\mathbf{a}) = E_{\mathbf{a}}^S(F) $, as the unique element $ F(\mathbf{a}) \in \mathbb{F} $ satisfying
$$ F - F(\mathbf{a}) \in \left( x_1-a_1, x_2 - a_2, \ldots, x_n - a_n \right). $$
Given $ \Omega \subseteq \mathbb{F}^n $, we define the skew evaluation map over $ \Omega $ as the left $ \mathbb{F} $-linear map
$$ E_\Omega^S : \mathbb{F} [\mathbf{x} ; \sigma,\delta] \longrightarrow \mathbb{F}^\Omega, $$
where $ f = E_\Omega^S(F) \in \mathbb{F}^\Omega $ is given by $ f(\mathbf{a}) = F(\mathbf{a}) $, for all $ \mathbf{a} \in \Omega $ and all $ F \in \mathbb{F} [\mathbf{x} ; \sigma,\delta] $.
\end{definition}

Note that the skew evaluation map depends on the pair $ (\sigma, \delta) $. This will be the case with most of the objects defined from now on. However, we will not write such a dependency for brevity, unless it is necessary to avoid confusions.

We now turn to linearized (multivariate skew) polynomials. The idea is to turn skew polynomials into linear maps by giving an alternative evaluation map. Observe that, whereas skew polynomials are evaluated on points in $ \mathbb{F}^n $, linearized skew polynomials are evaluated on elements in $ \mathbb{F} $.

\begin{definition} [\textbf{Linearized multivariate skew polynomials}] \label{def lin multi skew pols}
Given a ring morphism $ \sigma : \mathbb{F} \longrightarrow \mathbb{F}^{n \times n} $, a $ \sigma $-derivation $ \delta : \mathbb{F} \longrightarrow \mathbb{F}^n $, a point $ \mathbf{a} \in \mathbb{F}^n $ and a monomial $ \mathfrak{m} \in \mathcal{M} $, we define the maps 
\begin{equation*}
\mathcal{D}_\mathbf{a}^\mathfrak{m} : \mathbb{F} \longrightarrow \mathbb{F}, 
\end{equation*}
recursively on $ \mathfrak{m} \in \mathcal{M} $, as follows. We start by defining $ \mathcal{D}_\mathbf{a}^1 = {\rm Id} $. Next, if $ \mathcal{D}_\mathbf{a}^\mathfrak{m} $ is defined for $ \mathfrak{m} \in \mathcal{M} $, then we define
$$ \mathcal{D}_\mathbf{a}^{\mathbf{x}\mathfrak{m}}(\beta) = \left( \begin{array}{c}
\mathcal{D}_\mathbf{a}^{x_1 \mathfrak{m}}(\beta) \\
\mathcal{D}_\mathbf{a}^{x_2 \mathfrak{m}}(\beta) \\
\vdots \\
\mathcal{D}_\mathbf{a}^{x_n \mathfrak{m}}(\beta)
\end{array} \right) = \sigma(\mathcal{D}_\mathbf{a}^\mathfrak{m}(\beta)) \mathbf{a} + \delta(\mathcal{D}_\mathbf{a}^\mathfrak{m}(\beta)) \in \mathbb{F}^n, $$
for all $ \beta \in \mathbb{F} $. For convenience, we denote by $ \mathcal{D}_\mathbf{a} : \mathbb{F} \longrightarrow \mathbb{F}^n $ the map given by $ \mathcal{D}_\mathbf{a}(\beta) = \sigma(\beta) \mathbf{a} + \delta(\beta) \in \mathbb{F}^n $, for all $ \beta \in \mathbb{F} $. Hence, by definition, we have that
\begin{equation}
\mathcal{D}^{\mathbf{x} \mathfrak{m}}_\mathbf{a} = \mathcal{D}_\mathbf{a} \circ \mathcal{D}^\mathfrak{m}_\mathbf{a},
\label{eq def of mathcal D map}
\end{equation}
for all $ \mathfrak{m} \in \mathcal{M} $ and all $ \mathbf{a} \in \mathbb{F} $. We then define the left vector space of \textit{linearized} (\textit{multivariate skew}) \textit{polynomials} $ \mathbb{F}[\mathcal{D}_\mathbf{a}] $ over $ \mathbb{F} $, with variables $ x_1, x_2, \ldots, x_n $, morphism $ \sigma $, derivation $ \delta $ and conjugacy representative $ \mathbf{a} $, as the left vector space generated by the set of maps $ \mathcal{D}_\mathbf{a}^\mathcal{M} = \{ \mathcal{D}_\mathbf{a}^\mathfrak{m} \mid \mathfrak{m} \in \mathcal{M} \} $. Note that $ \mathcal{D}_\mathbf{a}^\mathcal{M} $ need not be a basis nor be in bijection with $ \mathcal{M} $. We define the left $ \mathbb{F} $-linear (surjective) map
\begin{equation}
\begin{array}{rccc}
\phi_\mathbf{a} : & \mathbb{F}[\mathbf{x}; \sigma,\delta] & \longrightarrow & \mathbb{F}[\mathcal{D}_\mathbf{a}] \\
 & \sum_{\mathfrak{m} \in \mathcal{M}} F_\mathfrak{m} \mathfrak{m} & \mapsto & \sum_{\mathfrak{m} \in \mathcal{M}} F_\mathfrak{m} \mathcal{D}_\mathbf{a}^\mathfrak{m},
\end{array}
\label{eq map phi a}
\end{equation}
and we denote $ F^{\mathcal{D}_\mathbf{a}} = \phi_\mathbf{a}(F) $, for all $ F \in \mathbb{F}[\mathbf{x}; \sigma,\delta] $.
\end{definition}

If $ n = 1 $, $ \delta = 0 $ and $ \mathbf{a} = 1 $, and $ \mathbb{F} $ is a finite field, then $ \mathbb{F}[\mathcal{D}_\mathbf{a}] $ is the ring of classical (univariate) linearized polynomials over finite fields \cite[Ch. 3]{lidl} as considered originally by Ore \cite{orespecial}. In general, if $ \mathcal{G} $ is a finite group of ring automorphisms of $ \mathbb{F} $ generated by $ \sigma_1, \sigma_2, \ldots, \sigma_n $, and $ \sigma = {\rm diag}(\sigma_1, \sigma_2, $ $ \ldots, $ $ \sigma_n) $, $ \delta = 0 $ and $ \mathbf{a} = \mathbf{1} $, then $ \mathbb{F}[\mathcal{D}_\mathbf{a}] $ is the group ring of $ \mathcal{G} $ over $ \mathbb{F} $ \cite[p. 104]{lang} (note that if, in addition, $ \mathbb{F} $ is a finite field, then $ \mathbb{F}[\mathcal{D}_\mathbf{a}] $ is also a subset of the ring of conventional polynomials in one variable $ \mathbb{F}[t] $, as in the univariate case $ n = 1 $). Finally, if $ \sigma = {\rm Id} $ and $ \mathbf{a} = \mathbf{0} $, then $ \mathbb{F}[\mathcal{D}_\mathbf{a}] $ is the differential polynomial ring on the derivations $ \delta_1, \delta_2, \ldots, \delta_n $ over $ \mathbb{F} $ \cite[App. D]{singer}. These particular rings are recovered thanks to linearized evaluations as in Definition \ref{def lin evaluation}, whereas they are not recovered if we only consider remainder-based evaluations (Definition \ref{def skew evaluation}).

By definition, linearized polynomials admit a natural evaluation, as follows.

\begin{definition} [\textbf{Linearized evaluation}] \label{def lin evaluation}
Given $ \mathbf{a} \in \mathbb{F}^n $, we define the evaluation of $ F^{\mathcal{D}_\mathbf{a}} = \sum_{\mathfrak{m} \in \mathcal{M}} F_\mathfrak{m} \mathcal{D}_\mathbf{a}^\mathfrak{m} \in \mathbb{F}[\mathcal{D}_\mathbf{a}] $ in $ \beta \in \mathbb{F} $ as
$$ F^{\mathcal{D}_\mathbf{a}} (\beta) = \sum_{\mathfrak{m} \in \mathcal{M}} F_\mathfrak{m} \mathcal{D}_\mathbf{a}^\mathfrak{m}(\beta) \in \mathbb{F} . $$
Given $ \Omega \subseteq \mathbb{F} $, we define the linearized evaluation map over $ \Omega $ as the left $ \mathbb{F} $-linear map
$$ E^L_\Omega : \mathbb{F}[\mathcal{D}_\mathbf{a}] \longrightarrow \mathbb{F}^\Omega, $$
where $ f = E^L_\Omega(F^{\mathcal{D}_\mathbf{a}}) \in \mathbb{F}^\Omega $ is given by $ f(\beta) = F^{\mathcal{D}_\mathbf{a}}(\beta) $, for $ \beta \in \Omega $ and $ F^{\mathcal{D}_\mathbf{a}} \in \mathbb{F}[\mathcal{D}_\mathbf{a}] $.
\end{definition}

Linearized polynomials are right linear over certain division subrings of $ \mathbb{F} $, called centralizers, which were defined in \cite[Eq. (3.1)]{lam-leroy} in the univariate case.

\begin{definition} [\textbf{Centralizers}] \label{def centralizer}
Given $ \mathbf{a} \in \mathbb{F}^n $, we define its \textit{centralizer} as
$$ K_\mathbf{a} = \{ \beta \in \mathbb{F} \mid \mathcal{D}_\mathbf{a}(\beta) = \mathbf{a} \beta \} \subseteq \mathbb{F} . $$
\end{definition} 

\begin{lemma} \label{lemma lin pols are right linear}
For all $ \mathbf{a} \in \mathbb{F}^n $, it holds that $ K_\mathbf{a} \subseteq \mathbb{F} $ is a division subring of $ \mathbb{F} $. Moreover, for $ F^{\mathcal{D}_\mathbf{a}} \in \mathbb{F}[\mathcal{D}_\mathbf{a}] $, the map $ \beta \mapsto F^{\mathcal{D}_\mathbf{a}} (\beta) $, for $ \beta \in \mathbb{F} $, is right linear over $ K_\mathbf{a} $.
\end{lemma}

Thus, we have provided a type of evaluation that turns skew polynomials into linear maps over certain division subring. In fact, it is easy to show that centralizers are the largest division subrings over which linearized polynomials are right linear. 

\begin{proposition} \label{prop centralizer is largest for linearity}
Given $ \mathbf{a} \in \mathbb{F}^n $, $ K_\mathbf{a} $ is the largest division subring of $ \mathbb{F} $ over which every linearized polynomial in $ \mathbb{F}[\mathcal{D}_\mathbf{a}] $ is right linear, since
$$ K_\mathbf{a} = \left\lbrace \lambda \in \mathbb{F} \mid F^{\mathcal{D}_\mathbf{a}}(\beta \lambda) = F^{\mathcal{D}_\mathbf{a}}(\beta) \lambda, \textrm{ for } \beta \in \mathbb{F} \textrm{ and } F^{\mathcal{D}_\mathbf{a}} \in \mathbb{F}[\mathcal{D}_\mathbf{a}] \right\rbrace . $$
\end{proposition}

We now connect both types of evaluations (i.e., Definitions \ref{def skew evaluation} and \ref{def lin evaluation}). To that end, we use the so-called truncated norms and conjugacy. The following result is \cite[Th. 2]{multivariateskew}. In the univariate case $ n = 1 $, it shows that remainder-based evaluations of skew polynomials also generalize Abhyankar's \textit{projective polynomials} \cite{projective-pols}.

\begin{lemma}[\textbf{Truncated norms \cite{multivariateskew}}] \label{lemma multivariate norms}
Denote by $ N_\mathfrak{m}(\mathbf{a}) = E^S_\mathbf{a}(\mathfrak{m}) \in \mathbb{F} $ the evaluation of $ \mathfrak{m} \in \mathcal{M} $ at $ \mathbf{a} \in \mathbb{F}^n $ as in Definition \ref{def skew evaluation}. Then 
\begin{equation}
N_{\mathbf{x}\mathfrak{m}}(\mathbf{a}) = \left( \begin{array}{c}
N_{x_1 \mathfrak{m}}(\mathbf{a}) \\
N_{x_2 \mathfrak{m}}(\mathbf{a}) \\
\vdots \\
N_{x_n \mathfrak{m}}(\mathbf{a}) \\
\end{array} \right) = \mathcal{D}_\mathbf{a} ( N_\mathfrak{m}(\mathbf{a}) ).
\label{eq def of multivariate norm}
\end{equation}
\end{lemma}
%

We next revisit the concept of conjugacy \cite[Def. 11]{multivariateskew}.

\begin{definition}[\textbf{Conjugacy \cite{multivariateskew}}] \label{def conjugacy}
Given $ \mathbf{a} \in \mathbb{F}^n $ and $ \beta \in \mathbb{F}^* $, we define the conjugate of $ \mathbf{a} $ with respect to $ \beta $ as
\begin{equation}
\mathbf{a}^\beta = \mathcal{D}_\mathbf{a}(\beta)\beta^{-1} \in \mathbb{F}^n .
\label{eq def conjugate}
\end{equation}
\end{definition}

By \cite[Lemma 12]{multivariateskew}, conjugacy defines an equivalence relation in $ \mathbb{F}^n $, thus a partition of $ \mathbb{F}^n $ into conjugacy classes, which will be denoted by 
\begin{equation}
\mathcal{C} (\mathbf{a}) = \{ \mathcal{D}_\mathbf{a}(\beta)\beta^{-1} \mid \beta \in \mathbb{F}^* \} \subseteq \mathbb{F}^n,
\label{eq def conjugacy class}
\end{equation}
for $ \mathbf{a} \in \mathbb{F}^n $. We have the following lemma.

\begin{lemma} \label{lemma linear map of the conjugate}
Let $ \mathbf{a}, \mathbf{b} \in \mathbb{F}^n $ and $ \gamma \in \mathbb{F}^* $ be such that $ \mathbf{b} = \mathbf{a}^\gamma $. Then it holds that
$$ \mathcal{D}_\mathbf{b}(\beta) \gamma = \mathcal{D}_\mathbf{a}(\beta \gamma) \quad \textrm{and} \quad K_\mathbf{b} = \gamma K_\mathbf{a} \gamma^{-1}, $$
for all $ \beta \in \mathbb{F} $. In particular, if $ \mathbb{F} $ is commutative, then $ K_\mathbf{b} = K_\mathbf{a} $. 
\end{lemma}

We may now prove the connection between linearized and skew evaluations. This result extends \cite[Lemma 24]{linearizedRS} from the univariate to the multivariate case.

\begin{theorem} \label{theorem evaluation of skew as map}
Given $ \mathbf{a} \in \mathbb{F}^n $, $ \beta \in \mathbb{F}^* $ and $ F \in \mathbb{F}[\mathbf{x};\sigma,\delta] $, it holds that
$$ F(\mathcal{D}_\mathbf{a}(\beta)\beta^{-1}) = F^{\mathcal{D}_\mathbf{a}}(\beta)\beta^{-1}. $$
\end{theorem}
\begin{proof}
By linearity, we only need to prove that $ N_\mathfrak{m}(\mathcal{D}_\mathbf{a}(\beta)\beta^{-1}) = \mathcal{D}_\mathbf{a}^\mathfrak{m}(\beta)\beta^{-1} $ recursively on $ \mathfrak{m} \in \mathcal{M} $. The case $ \mathfrak{m} = 1 $ is trivial. Assume now that it is true for $ \mathfrak{m} \in \mathcal{M} $. Combining (\ref{eq def of mathcal D map}) and (\ref{eq def of multivariate norm}) with Lemma \ref{lemma linear map of the conjugate}, and denoting $ \mathbf{b} = \mathbf{a}^\beta = \mathcal{D}_\mathbf{a}(\beta)\beta^{-1} $, we conclude by
$$ N_{\mathbf{x}\mathfrak{m}}(\mathbf{b}) = \mathcal{D}_\mathbf{b}(N_\mathfrak{m}(\mathbf{b})) = \mathcal{D}_\mathbf{a}(N_\mathfrak{m}(\mathbf{b})\beta)\beta^{-1} = \mathcal{D}_\mathbf{a}(\mathcal{D}_\mathbf{a}^\mathfrak{m}(\beta)) \beta^{-1} = \mathcal{D}_\mathbf{a}^{\mathbf{x}\mathfrak{m}}(\beta)\beta^{-1}. $$
\end{proof}

\section{Linearizing sets of roots and P-independence} \label{sec linearized P-closed sets}

The structure of sets of roots play a central role in the study of conventional polynomials. In particular, Lagrange interpolation behaves well when the evaluation points can be differentiated by taking ``independent'' values on different polynomials. This is also true for skew polynomials and leads to the concepts of P-closed sets, P-independence and P-bases, where P stands for polynomial. Such concepts were introduced by Lam and Leroy in \cite{lam, algebraic-conjugacy, lam-leroy} in the univariate case, and in \cite{multivariateskew} for the multivariate case.

In this section, we extend the important results \cite[Th. 4.5]{lam-leroy} and \cite[Th. 23]{lam}, by Lam and Leroy, to the multivariate case. This provides an explicit characterization of P-closed sets as lists of vector spaces (Theorems \ref{theorem linearized version of P-closed in one conj} and \ref{theorem lin P-closed sets in several conj classes}), and a characterization of P-independence as linear independence per conjugacy class (Lemmas \ref{lemma linearized independence one conj class} and \ref{lemma P-independent several conj class}), showing that P-independent sets form a representable matroid \cite{oxley}.

Given a set $ \mathcal{A} \subseteq \mathbb{F}[\mathbf{x}; \sigma, \delta] $, we define its set of roots as
$$ Z( \mathcal{A} ) = \{ \mathbf{a} \in \mathbb{F}^n \mid F(\mathbf{a}) = 0, \textrm{ for all } F \in \mathcal{A} \}. $$
Conversely, given a set $ \Omega \subseteq \mathbb{F}^n $, we define its associated left ideal as
$$ I(\Omega) = \{ F \in \mathbb{F}[\mathbf{x}; \sigma, \delta] \mid F(\mathbf{a}) = 0, \textrm{ for all } \mathbf{a} \in \Omega \}. $$

As in \cite[Def. 16]{multivariateskew}, we define the \textit{P-closure} of a set $ \Omega \subseteq \mathbb{F}^n $ as $ \overline{\Omega} = Z(I(\Omega)) $, and we say that $ \Omega $ is \textit{P-closed} if $ \overline{\Omega} = \Omega $. As in \cite[Def. 22]{multivariateskew}, given a P-closed set $ \Omega \subseteq \mathbb{F}^n $, we say that $ \mathcal{G} \subseteq \Omega $ generates $ \Omega $ if $ \overline{\mathcal{G}} = \Omega $, and it is then called a set of \textit{P-generators} for $ \Omega $. We say that $ \Omega $ is finitely generated if it has a finite set of P-generators. As in \cite[Def. 23]{multivariateskew}, we say that $ \mathbf{a} \in \mathbb{F}^n $ is \textit{P-independent} from $ \Omega \subseteq \mathbb{F}^n $ if it does not belong to $ \overline{\Omega} $. A set $ \Omega \subseteq \mathbb{F}^n $ is called \textit{P-independent} if every $ \mathbf{a} \in \Omega $ is P-independent from $ \Omega \setminus \{ \mathbf{a} \} $. Finally, as in \cite[Def. 24]{multivariateskew}, we say that a subset $ \mathcal{B} \subseteq \Omega $ is a \textit{P-basis} of a P-closed set $ \Omega \subseteq \mathbb{F}^n $ if it is P-independent and a set of P-generators of $ \Omega $. 

If a P-closed set is finitely generated, then it admits a finite P-basis \cite[Cor. 26]{multivariateskew}, and any two of its P-bases are finite and have the same number of elements \cite[Cor. 32]{multivariateskew}. Thus, we may define the \textit{rank} of a finitely generated P-closed set $ \Omega \subseteq \mathbb{F}^n $, denoted by $ {\rm Rk}(\Omega) $, as the size of any of its P-bases. In fact, the collection of P-independent subsets of a finitely generated P-closed set forms a matroid \cite[Prop. 27]{multivariateskew} and the concepts above correspond to those in classical matroid theory \cite{oxley}.

The main feature of P-bases of finitely generated P-closed sets is the following result on Lagrange interpolating skew polynomials, which is \cite[Th. 4]{multivariateskew}.

\begin{theorem}[\textbf{Skew Lagrange interpolation \cite{multivariateskew}}] \label{th lagrange interpolation}
Let $ \Omega \subseteq \mathbb{F}^n $ be a P-closed set with P-basis $ \mathcal{B} = \{ \mathbf{b}_1, \mathbf{b}_2, \ldots, \mathbf{b}_M \} $, where $ M = {\rm Rk}(\Omega) < \infty $. The following hold:
\begin{enumerate}
\item
If $ E^S_{\mathcal{B}}(F) = E^S_{\mathcal{B}}(G) $, then $ E^S_{\Omega}(F) = E^S_{\Omega}(G) $, for all $ F,G \in \mathbb{F}[\mathbf{x}; \sigma, \delta] $. 
\item
For every $ a_1, a_2, \ldots, a_M \in \mathbb{F} $, there exists $ F \in \mathbb{F}[\mathbf{x}; \sigma, \delta] $ such that $ \deg(F) < M $ and $ F(\mathbf{b}_i) = a_i $, for $ i = 1,2, \ldots, M $.
\end{enumerate}
\end{theorem}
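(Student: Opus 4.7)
For Part 1, set $H = F - G$. Then $H$ vanishes on $\mathcal{B}$ by hypothesis, so $\mathcal{B} \subseteq Z(\{H\})$. The latter is P-closed (as the zero set of a single skew polynomial, cf.\ the discussion after Definition \ref{def P-closed}), and $\overline{\mathcal{B}} = \Omega$ is by definition the smallest P-closed set containing $\mathcal{B}$; hence $\Omega \subseteq Z(\{H\})$, which gives $E^S_\Omega(F) = E^S_\Omega(G)$.

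For Part 2, the plan is to prove by induction on $M$ the stronger statement that there exist Lagrange-type polynomials $L_1, \ldots, L_M \in \mathbb{F}[\mathbf{x}; \sigma, \delta]$ with $L_i(\mathbf{b}_j) = \delta_{ij}$ and $\deg L_i \leq M - 1$ for all $i, j \in \{1, \ldots, M\}$; once this is in hand, setting $F = \sum_{i=1}^M a_i L_i$ and using left linearity of skew evaluation yields a polynomial of degree $< M$ interpolating the required values.

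The inductive step: assuming a Lagrange family of degree $\leq k - 2$ for $\mathcal{B}' = \{\mathbf{b}_1, \ldots, \mathbf{b}_{k-1}\}$, I would first produce an auxiliary polynomial $L_k$ with $L_k(\mathbf{b}_j) = 0$ for $j < k$, $L_k(\mathbf{b}_k) = 1$, and $\deg L_k \leq k - 1$, and then update each old polynomial by $L_i \mapsto L_i - L_i(\mathbf{b}_k) L_k$, which preserves the Lagrange property (now valid on $\{\mathbf{b}_1, \ldots, \mathbf{b}_k\}$) and maintains the degree bound $\leq k - 1$, since $L_i(\mathbf{b}_k)$ is a scalar acting on the left. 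The existence of \emph{some} $L_k$ vanishing on $\mathcal{B}'$ but not on $\mathbf{b}_k$ is immediate from P-independence, as $\mathbf{b}_k \notin \overline{\mathcal{B}'}$.

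The main obstacle is to secure the degree bound $\deg L_k \leq k - 1$. The natural first attempt is an iterative product of $k-1$ degree-one factors of the form $x_{i_j} - \gamma_j$: the product rule for skew evaluation, which forces $(PQ)(\mathbf{a}) = 0$ whenever $Q(\mathbf{a}) = 0$, makes vanishing at one point at a time easy to enforce, but non-vanishing at $\mathbf{b}_k$ must be propagated through successive conjugates $\mathbf{b}_k^\beta$ (Definition \ref{def conjugacy}), and the indices $i_j$ and constants $\gamma_j$ have to be chosen inductively to avoid accidental cancellation along the way. A more linear-algebraic alternative, which I would fall back on if the explicit construction becomes unwieldy, is to invoke the skew Vandermonde rank identity \cite[Proposition 41]{multivariateskew} (which gives rank $M$ for the full Vandermonde matrix of $\mathcal{B}$) and to argue, again by induction on $M$, that $M$ linearly independent columns can be selected from among those indexed by monomials of degree $< M$; interpolation then reduces to solving a non-singular $M \times M$ linear system over $\mathbb{F}$.
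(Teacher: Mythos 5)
First, a framing remark: the paper you are working from does not prove this theorem at all; it is imported verbatim as \cite[Theorem 4]{multivariateskew}, so the only benchmark is the proof in that reference. Your Part 1 is correct and complete: $H=F-G$ vanishes on $\mathcal{B}$ by left linearity of $E^S$, $Z(\{H\})$ is P-closed, and since $\Omega=\overline{\mathcal{B}}$ is the smallest P-closed set containing $\mathcal{B}$, we get $\Omega\subseteq Z(\{H\})$. This is the standard argument and is not where the content of the theorem lies.

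Part 2 has a genuine gap. Your reduction to a dual P-basis ($L_i(\mathbf{b}_j)=\delta_{ij}$, $\deg L_i\le M-1$) and the update $L_i\mapsto L_i-L_i(\mathbf{b}_k)L_k$ are fine, but the whole difficulty of the theorem is the existence of $L_k$ with the degree bound, and you do not establish it. P-independence only gives \emph{some} polynomial vanishing on $\mathcal{B}'$ and not at $\mathbf{b}_k$, with no control on its degree. In the iterative construction by left multiplication with degree-one factors, the product rule (Theorem \ref{th product rule}) shows that killing $\mathbf{b}_j$ forces the constant $c=(\mathbf{b}_j^{\beta})_i$ with $\beta=G(\mathbf{b}_j)$, and then non-vanishing at $\mathbf{b}_k$ fails for \emph{every} choice of coordinate $i$ exactly when $\mathbf{b}_k^{\gamma}=\mathbf{b}_j^{\beta}$ with $\gamma=G(\mathbf{b}_k)$; ruling out or repairing this degenerate case (e.g.\ by passing to a suitable combination of two candidate polynomials) is precisely the technical heart of the proof in \cite{multivariateskew}, and saying the constants ``have to be chosen inductively to avoid accidental cancellation'' names the obstacle without resolving it. Your fallback is circular: Proposition \ref{prop rank vandermonde} (= \cite[Proposition 41]{multivariateskew}) is obtained in that reference \emph{from} the interpolation theorem and dual P-bases — indeed the present paper explicitly presents the left invertibility of $V_M(\mathcal{B})$ (Corollary \ref{cor vandermonde left invertibility}) as a reinterpretation of Theorem \ref{th lagrange interpolation} — so it cannot be invoked to prove Part 2 unless you supply an independent proof of the rank identity, which is essentially the statement you are trying to prove. (A minor additional slip: in $V_M(\mathcal{B})$ the monomials index rows, not columns; with full rank one would select $M$ independent rows, but this only matters once the rank claim is legitimately available.)
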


Skew Lagrange interpolation leads to the concept of dual P-bases \cite[Def. 30]{multivariateskew}, which we will use to extend \cite[Th. 4.5]{lam-leroy} and \cite[Th. 23]{lam} to the multivariate case.

\begin{definition} [\textbf{Dual P-bases \cite{multivariateskew}}] \label{def dual P-bases}
Given a P-basis $ \mathcal{B} = \{ \mathbf{b}_1, \mathbf{b}_2, \ldots, \mathbf{b}_M \} $ of a P-closed set $ \Omega \subseteq \mathbb{F}^n $, a dual P-basis of $ \mathcal{B} $ is a set $ \mathcal{B}^* = \{ F_1, F_2, \ldots, F_M \} \subseteq \mathbb{F}[\mathbf{x}; \sigma, \delta] $ such that $ F_i(\mathbf{b}_j) = \delta_{i,j} $ (the Kronecker delta), for all $ i,j = 1,2, \ldots, M $.
\end{definition}

By Theorem \ref{th lagrange interpolation}, any P-basis of a P-closed set of rank $ M < \infty $ admits a dual P-basis consisting of $ M $ skew polynomials of degree less than $ M $. 

We will also use the following two technical tools throughout this section. 

\begin{corollary} [\textbf{\cite{multivariateskew}}] \label{cor inverse projection map}
If $ \{ F_1, F_2, \ldots, F_M \} $ is a dual P-basis of a finitely generated P-closed set $ \Omega \subseteq \mathbb{F}^n $, the projection map restricts to a left vector space isomorphism
$$ \langle F_1, F_2, \ldots, F_M \rangle^L_\mathbb{F} \cong \mathbb{F}[\mathbf{x}; \sigma, \delta] / I(\Omega) $$
over $ \mathbb{F} $. Moreover, $ F_1, F_2, \ldots, F_M  $ are left linearly independent over $ \mathbb{F} $, hence
$$ \dim^L_\mathbb{F} \left( \mathbb{F}[\mathbf{x}; \sigma, \delta] / I(\Omega) \right) = {\rm Rk}(\Omega). $$
\end{corollary}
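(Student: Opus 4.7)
The plan is to show that the natural projection $\pi : \mathbb{F}[\mathbf{x};\sigma,\delta] \longrightarrow \mathbb{F}[\mathbf{x};\sigma,\delta]/I(\Omega)$, restricted to the left $\mathbb{F}$-subspace $\mathcal{V} = \langle F_1, F_2, \ldots, F_M \rangle^L_{\mathbb{F}}$, is both injective and surjective. Since $\pi$ is automatically left $\mathbb{F}$-linear, this will yield the desired isomorphism, and the left linear independence of the $F_i$ will follow as a byproduct of the injectivity argument.

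For injectivity, I would pick an arbitrary element $F = \sum_{i=1}^M \lambda_i F_i \in \mathcal{V}$ lying in the kernel of the restricted $\pi$, i.e., $F \in I(\Omega)$. Then $F(\mathbf{b}_j) = 0$ for every $\mathbf{b}_j \in \mathcal{B} \subseteq \Omega$. Using the dual P-basis property $F_i(\mathbf{b}_j) = \delta_{i,j}$ and left linearity of skew evaluation at a single point, this collapses to $\lambda_j = 0$ for each $j = 1, 2, \ldots, M$. Hence $F = 0$, which simultaneously shows that $\pi|_{\mathcal{V}}$ is injective and that $F_1, \ldots, F_M$ are left linearly independent over $\mathbb{F}$ (just specialize to the case $F \in I(\Omega)$ is $0$ itself, i.e., the previous argument with $F = 0$).

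For surjectivity, let $G \in \mathbb{F}[\mathbf{x};\sigma,\delta]$ be arbitrary and define the candidate interpolant
\[
H = \sum_{i=1}^M G(\mathbf{b}_i) F_i \in \mathcal{V}.
\]
By left linearity of skew evaluation and the dual basis relations, $H(\mathbf{b}_j) = \sum_{i=1}^M G(\mathbf{b}_i) F_i(\mathbf{b}_j) = G(\mathbf{b}_j)$ for every $j$, so $E^S_{\mathcal{B}}(H) = E^S_{\mathcal{B}}(G)$. Skew Lagrange interpolation (Theorem \ref{th lagrange interpolation}, Item 1) then upgrades this agreement on $\mathcal{B}$ to agreement on all of $\Omega$, so $G - H \in I(\Omega)$ and hence $\pi(G) = \pi(H) \in \pi(\mathcal{V})$. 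This establishes surjectivity.

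Combining both parts, $\pi|_{\mathcal{V}}$ is the claimed left $\mathbb{F}$-linear isomorphism, and since $\{F_1, \ldots, F_M\}$ is a left $\mathbb{F}$-linearly independent spanning set of $\mathcal{V}$, we obtain $\dim^L_\mathbb{F}(\mathbb{F}[\mathbf{x};\sigma,\delta]/I(\Omega)) = \dim^L_\mathbb{F}(\mathcal{V}) = M = {\rm Rk}(\Omega)$. I do not anticipate any real obstacle here: the only substantive step is recognizing that the interpolation formula $H = \sum G(\mathbf{b}_i) F_i$ is the natural right inverse, at which point Theorem \ref{th lagrange interpolation} does all the heavy lifting.
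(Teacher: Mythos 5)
Your proof is correct and follows essentially the same route as the source argument behind this corollary: the dual P-basis relations give injectivity (and left independence), and the interpolation formula $H=\sum_i G(\mathbf{b}_i)F_i$ together with Item 1 of Theorem \ref{th lagrange interpolation} gives surjectivity of the restricted projection. No gaps; the use of left linearity of skew evaluation at each $\mathbf{b}_j$ is justified by Definition \ref{def skew evaluation}.
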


The following product rule was given in \cite[Th. 2.7]{lam-leroy} in the univariate case, and in \cite[Th. 3]{multivariateskew} in the multivariate case.

\begin{theorem}[\textbf{Product rule \cite{multivariateskew}}] \label{th product rule}
Given skew polynomials $ F, G \in \mathbb{F}[\mathbf{x}; \sigma, \delta] $ and $ \mathbf{a} \in \mathbb{F}^n $, if $ G(\mathbf{a}) = 0 $, then $ (FG)(\mathbf{a}) = 0 $, and if $ \beta = G(\mathbf{a}) \neq 0 $, then
\begin{equation*}
(FG)(\mathbf{a}) = F(\mathbf{a}^\beta) G(\mathbf{a}).
\end{equation*}
\end{theorem}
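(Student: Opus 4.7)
The plan is to split into the two cases separately. When $G(\mathbf{a}) = 0$, the conclusion is immediate from the ideal-theoretic description of skew evaluation. By Definition \ref{def skew evaluation}, $G(\mathbf{a}) = 0$ is equivalent to $G \in (x_1 - a_1, x_2 - a_2, \ldots, x_n - a_n)$, and since this is a \emph{left} ideal of $\mathbb{F}[\mathbf{x};\sigma,\delta]$, the product $FG$ also lies in it, so $(FG)(\mathbf{a}) = 0$.

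For the nontrivial case $\beta = G(\mathbf{a}) \neq 0$, I would first reduce to $F = \mathfrak{m}$ a monomial. Both sides of the claimed identity are left $\mathbb{F}$-linear in $F$: the left-hand side by left linearity of the skew evaluation map and of left multiplication, and the right-hand side because $F \mapsto F(\mathbf{a}^\beta)\beta$ is a composition of the left linear map $E_{\mathbf{a}^\beta}^S$ with right multiplication by $\beta$. Hence it suffices to prove the identity for monomials, and I would do so by induction on $\deg(\mathfrak{m})$. The base case $\mathfrak{m} = 1$ is tautological.

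For the inductive step, write $\mathfrak{m} = x_i \mathfrak{n}$ and apply the inductive hypothesis to $\mathfrak{n}$:
$$ \mathfrak{n} G = \gamma + R, \qquad \gamma := \mathfrak{n}(\mathbf{a}^\beta)\beta, \qquad R \in (x_1 - a_1, \ldots, x_n - a_n). $$
Multiplying by $x_i$ on the left and using the product rule (\ref{eq rule defining product expanded}) with the fact that $x_i R$ still lies in the left ideal (and so vanishes at $\mathbf{a}$), I obtain
$$ (\mathfrak{m} G)(\mathbf{a}) \;=\; \sum_{j=1}^n \sigma_{i,j}(\gamma) a_j + \delta_i(\gamma) \;=\; \mathcal{D}_\mathbf{a}^{x_i}(\gamma). $$
It remains to identify $\mathcal{D}_\mathbf{a}^{x_i}(\gamma)$ with $\mathfrak{m}(\mathbf{a}^\beta)\beta$. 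Applying Theorem \ref{theorem evaluation of skew as operator} to $\mathfrak{n}$ gives $\gamma = \mathcal{D}_\mathbf{a}^\mathfrak{n}(\beta)$, so by the recursion (\ref{eq def of mathcal D operator}) in Definition \ref{def lin multi skew pols} one has $\mathcal{D}_\mathbf{a}^{x_i}(\gamma) = \mathcal{D}_\mathbf{a}^{x_i \mathfrak{n}}(\beta) = \mathcal{D}_\mathbf{a}^\mathfrak{m}(\beta)$, which equals $\mathfrak{m}(\mathbf{a}^\beta)\beta$ again by Theorem \ref{theorem evaluation of skew as operator} (equivalently by Lemma \ref{lemma multivariate norms}).

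The main obstacle is the bookkeeping at the end: translating between the skew evaluation of monomials at the conjugate point $\mathbf{a}^\beta$ and iterates of the operator $\mathcal{D}_\mathbf{a}$ applied to $\beta$. Conceptually this is not deep, because the recursive definition of $\mathcal{D}_\mathbf{a}^\mathfrak{m}$ was engineered precisely so that this identification works monomial-by-monomial; the key leverage comes from Theorem \ref{theorem evaluation of skew as operator}, which bridges the two viewpoints. The only slightly delicate point is remembering to transport from the conjugate class representative $\mathbf{a}^\beta$ back to $\mathbf{a}$ via right multiplication by $\beta$, but this is exactly the content of Theorem \ref{theorem evaluation of skew as operator}.
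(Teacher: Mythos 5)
Your proof is correct. Note first that the paper itself does not prove this statement: it is imported as \cite[Theorem 3]{multivariateskew}, so there is no internal proof to compare against, and a self-contained argument like yours is a genuine addition. Your two cases are both sound: the case $G(\mathbf{a})=0$ is exactly the left-ideal absorption argument, and the reduction of the nonzero case to monomials by left linearity of $F \mapsto (FG)(\mathbf{a})$ and of $F \mapsto F(\mathbf{a}^\beta)\beta$ is valid. The induction step also checks out: from $\mathfrak{n}G = \gamma + R$ with $\gamma = \mathfrak{n}(\mathbf{a}^\beta)\beta$ and $R$ in the left ideal, left multiplication by $x_i$ and the commutation rule give $(x_i\mathfrak{n}G)(\mathbf{a}) = \sum_j \sigma_{i,j}(\gamma)a_j + \delta_i(\gamma) = \mathcal{D}_\mathbf{a}^{x_i}(\gamma)$, and the identification $\gamma = \mathcal{D}_\mathbf{a}^{\mathfrak{n}}(\beta)$, hence $\mathcal{D}_\mathbf{a}^{x_i}(\gamma) = \mathcal{D}_\mathbf{a}^{x_i\mathfrak{n}}(\beta) = \mathfrak{m}(\mathbf{a}^\beta)\beta$, follows from Theorem \ref{theorem evaluation of skew as operator} and the recursion (\ref{eq def of mathcal D operator}). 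One point worth making explicit: invoking Theorem \ref{theorem evaluation of skew as operator} is legitimate here because its proof in this paper uses only Lemma \ref{lemma multivariate norms}, the defining recursions, and Lemma \ref{lemma linear operator of the conjugate}, and never the product rule, so there is no circularity. A nice feature of your route through the operators $\mathcal{D}_\mathbf{a}$ is that it is indifferent to whether the intermediate value $\gamma$ vanishes; an induction carried out directly on the stated formula in terms of conjugates would have to handle $\gamma = 0$ separately, since $\mathbf{a}^\gamma$ is only defined for nonzero exponents.
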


We may now characterize P-independence in one conjugacy class as right linear independence over the corresponding centralizer.

\begin{lemma} \label{lemma linearized independence one conj class}
Let $ \mathbf{a}, \mathbf{b}_1, \mathbf{b}_2, \ldots, \mathbf{b}_M \in \mathbb{F}^n $ and $ \beta_1, \beta_2, \ldots, \beta_M \in \mathbb{F}^* $ be such that 
$$ \mathbf{b}_i = \mathbf{a}^{\beta_i} = \mathcal{D}_\mathbf{a}(\beta_i) \beta_i^{-1}, $$
for $ i = 1,2, \ldots, M $. Then $ \mathcal{B} = \{ \mathbf{b}_1, \mathbf{b}_2, \ldots, \mathbf{b}_M \} $ is P-independent if, and only if, $ \mathcal{B}^\mathcal{D} = \{ \beta_1, \beta_2, \ldots, \beta_M \} $ is right linearly independent over $ K_\mathbf{a} $.
\end{lemma}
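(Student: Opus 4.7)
The plan is to use Theorem \ref{theorem evaluation of skew as operator} as a dictionary between skew and linearized evaluations, namely $F(\mathbf{b}_i) = F^{\mathcal{D}}(\beta_i)\beta_i^{-1}$, so that $\mathbf{b}_i$ is a root of $F$ if and only if $\beta_i$ is a root of the right $K_\mathbf{a}$-linear map $F^{\mathcal{D}}$. Combined with the right $K_\mathbf{a}$-linearity of every $F^{\mathcal{D}}$ (Lemma \ref{lemma lin pols are right linear}), this opens the two passages between the skew-polynomial and right $K_\mathbf{a}$-linear worlds needed for both directions.

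I would first handle the easier direction ``$\mathcal{B}$ P-independent $\Rightarrow$ $\mathcal{B}^{\mathcal{D}}$ right $K_\mathbf{a}$-linearly independent'' by contrapositive. Starting from a non-trivial relation $\sum_i \beta_i\lambda_i = 0$ with some $\lambda_k \neq 0$, for any $F$ vanishing on $\{\mathbf{b}_i : i\neq k\}$ the right $K_\mathbf{a}$-linearity of $F^{\mathcal{D}}$ gives $F^{\mathcal{D}}(\beta_k)\lambda_k = 0$, hence $F^{\mathcal{D}}(\beta_k) = 0$ and $F(\mathbf{b}_k) = 0$. Therefore $\mathbf{b}_k\in\overline{\mathcal{B}\setminus\{\mathbf{b}_k\}}$, which is P-dependence.

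The converse ``$\mathcal{B}^{\mathcal{D}}$ right $K_\mathbf{a}$-linearly independent $\Rightarrow$ $\mathcal{B}$ P-independent'' I would establish by induction on $M$. The base $M=1$ is immediate from $\overline{\varnothing} = \varnothing$ and $\beta_1\neq 0$. For $M\geq 2$ the inductive hypothesis applied to $\{\beta_1,\ldots,\beta_{M-1}\}$ makes $\{\mathbf{b}_1,\ldots,\mathbf{b}_{M-1}\}$ a P-basis of the finitely generated P-closed set $\Omega := \overline{\{\mathbf{b}_1,\ldots,\mathbf{b}_{M-1}\}}$. If $\mathcal{B}$ were P-dependent, after relabeling one has $\mathbf{b}_M \in \Omega$; choosing a dual P-basis $\{F_1,\ldots,F_{M-1}\}$ from Corollary \ref{corollary existence dual P-bases}, the expansion $F \equiv \sum_i F(\mathbf{b}_i)F_i \pmod{I(\Omega)}$ provided by Corollary \ref{cor inverse projection map} evaluated at $\mathbf{b}_M$ yields $F(\mathbf{b}_M) = \sum_{i<M} F(\mathbf{b}_i)\mu_i$ with $\mu_i := F_i(\mathbf{b}_M)$. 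Translating through Theorem \ref{theorem evaluation of skew as operator} gives
\[
G(\beta_M) \;=\; \sum_{i<M} G(\beta_i)\,\lambda_i, \qquad \lambda_i := \beta_i^{-1}\mu_i\beta_M,
\]
valid for every $G\in\mathbb{F}[\mathcal{D}_\mathbf{a}]$; taking $G=\mathrm{Id}$ already gives the relation $\beta_M = \sum_i \beta_i\lambda_i$.

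The remaining, and principal, obstacle is to verify that $\lambda_i \in K_\mathbf{a}$, so that this expression is an actual right $K_\mathbf{a}$-linear dependence contradicting the hypothesis. I plan to specialize the displayed identity to $G = \mathcal{D}_\mathbf{a}^{x_k}$ for each $k$ and compare with the direct expansion of $\mathcal{D}_\mathbf{a}^{x_k}(\sum_i \beta_i\lambda_i)$ obtained from the Leibniz rules $\sigma(\beta\lambda) = \sigma(\beta)\sigma(\lambda)$ and $\delta(\beta\lambda) = \sigma(\beta)\delta(\lambda) + \delta(\beta)\lambda$. After cancellation the comparison collapses to the vector identity
\[
\sum_{i<M} \sigma(\beta_i)\,\bigl[\mathcal{D}_\mathbf{a}(\lambda_i) - \mathbf{a}\lambda_i\bigr] \;=\; \mathbf{0} \;\in\; \mathbb{F}^n,
\]
and running the same comparison with $G = \mathcal{D}_\mathbf{a}^\mathfrak{m}$ for larger monomials produces a compatible family of such identities. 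I would then invoke the right $K_\mathbf{a}$-linear independence of $\{\beta_1,\ldots,\beta_{M-1}\}$ to separate the summands and conclude $\mathcal{D}_\mathbf{a}(\lambda_i) = \mathbf{a}\lambda_i$ for every $i$, i.e., $\lambda_i \in K_\mathbf{a}$. This separation step is where I expect the bulk of the technical work to lie; the remainder is a routine transfer between the skew and linearized languages.
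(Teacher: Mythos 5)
The easy direction and the skeleton of your converse are fine and run parallel to the paper's proof: the induction, the relabeling so that $\mathbf{b}_M\in\overline{\{\mathbf{b}_1,\ldots,\mathbf{b}_{M-1}\}}$, the dual P-basis expansion giving $G(\beta_M)=\sum_{i<M}G(\beta_i)\lambda_i$ for all $G\in\mathbb{F}[\mathcal{D}_\mathbf{a}]$ with $\lambda_i=\beta_i^{-1}F_i(\mathbf{b}_M)\beta_M$, and the correct identification of the crux as showing $\lambda_i\in K_\mathbf{a}$. The gap is exactly at that crux, in the step you defer. From your comparisons one gets, for every monomial $\mathfrak{m}$ (your display is the case $\mathfrak{m}=1$),
\begin{equation*}
\sum_{i<M}\sigma\bigl(\mathcal{D}^{\mathfrak{m}}_{\mathbf{a}}(\beta_i)\bigr)\,\mathbf{v}_i=\mathbf{0},\qquad \mathbf{v}_i:=\mathcal{D}_{\mathbf{a}}(\lambda_i)-\mathbf{a}\lambda_i\in\mathbb{F}^n,
\end{equation*}
but ``invoking the right $K_\mathbf{a}$-linear independence of $\beta_1,\ldots,\beta_{M-1}$ to separate the summands'' is not a legitimate move: the coefficients $\mathbf{v}_i$ lie in $\mathbb{F}^n$, not in $K_\mathbf{a}$, so $K_\mathbf{a}$-independence says nothing about such a relation. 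Concretely, for $n=1$, $\sigma={\rm Id}$, $\mathbf{a}=0$ your displayed identity reads $\sum_i\beta_i\delta(\lambda_i)=0$, which has many solutions with $\delta(\lambda_i)\neq 0$ even when the $\beta_i$ are independent over the constants; and the stronger claim that the \emph{whole} monomial family forces $\mathbf{v}_i=\mathbf{0}$ is, in that same specialization, precisely the Wronskian rank statement (\ref{eq rank of wronskian}), i.e.\ a statement of the same strength as the hard direction of the lemma you are proving. As written, the decisive step is therefore circular (or at best unproven), which is why you rightly sense that ``the bulk of the technical work'' lies there.

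The good news is that your framework closes without that claim, but the separating device must be interpolation over $\mathcal{B}'=\{\mathbf{b}_1,\ldots,\mathbf{b}_{M-1}\}$ (available by the induction hypothesis), not $K_\mathbf{a}$-independence. Since $\sigma$ is a ring morphism into $\mathbb{F}^{n\times n}$, left-combining the monomial identities yields $\sum_{i<M}\sigma\bigl(F^{\mathcal{D}_\mathbf{a}}(\beta_i)\bigr)\mathbf{v}_i=\mathbf{0}$ for \emph{every} $F\in\mathbb{F}[\mathbf{x};\sigma,\delta]$; taking $F=F_j$ (the dual P-basis you already introduced) and using Theorem \ref{theorem evaluation of skew as operator} gives $F_j^{\mathcal{D}_\mathbf{a}}(\beta_i)=\delta_{i,j}\beta_i$, hence $\sigma(\beta_j)\mathbf{v}_j=\mathbf{0}$, and $\sigma(\beta_j)$ is invertible because $\beta_j\neq 0$, so $\mathbf{v}_j=\mathbf{0}$, i.e.\ $\lambda_j\in K_\mathbf{a}$. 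The paper reaches the same conclusion by a different device: it applies the product rule (Theorem \ref{th product rule}) to $\mathbf{G}_i=(\mathbf{x}-\mathbf{b}_i^{\beta_M})(\beta_M F_i)$, which vanishes on $\mathcal{B}'$ and hence at $\mathbf{b}_M$, and reads off $\mathbf{a}^{\beta_i^{-1}F_i(\mathbf{b}_M)\beta_M}=\mathbf{a}$, so $\lambda_i\in K_\mathbf{a}$ by Definition \ref{def centralizer}; either completion repairs your argument, but the step as you stated it would fail.
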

\begin{proof}
We first prove the direct implication. Assume that $ \mathcal{B} $ is P-independent, but $ \mathcal{B}^\mathcal{D} $ is not right linearly independent over $ K_\mathbf{a} $. We may assume without loss of generality that there exist $ \lambda_1, \lambda_2, \ldots, \lambda_{M-1} \in K_\mathbf{a} $ such that
$$ \beta_M = \sum_{i=1}^{M-1} \beta_i \lambda_i. $$
By Theorem \ref{th lagrange interpolation}, there exists $ F \in \mathbb{F}[\mathbf{x}, \sigma, \delta] $ such that $ F(\beta_M) = 1 $ and $ F(\beta_i) = 0 $, for $ i = 1,2, \ldots, M-1 $. Therefore by Lemma \ref{lemma lin pols are right linear} and Theorem \ref{theorem evaluation of skew as map}, it holds that
$$ \beta_M = F_M^{\mathcal{D}_\mathbf{a}}(\beta_M) = \sum_{i=1}^{M-1} F_M^{\mathcal{D}_\mathbf{a}}(\beta_i) \lambda_i = 0, $$
which is absurd since $ \beta_M \in \mathbb{F}^* $ by hypothesis.

Conversely, assume that $ \mathcal{B}^\mathcal{D} $ is right linearly independent over $ K_\mathbf{a} $. We will prove by induction on $ M $ that $ \mathcal{B} $ is P-independent. The case $ M = 1 $ is obvious since all singleton sets are P-independent. Assume that $ \mathcal{B}^\prime = \{ \mathbf{b}_1, \mathbf{b}_2, \ldots, \mathbf{b}_{M-1} \} $ is P-independent but $ \mathcal{B} $ is not. Then $ \mathbf{b}_M \in \overline{\mathcal{B}^\prime} $, since otherwise $ \mathcal{B} $ would be P-independent \cite[Lemma 36]{multivariateskew}.

Let $ \mathcal{B}^{\prime *} = \{ F_1, F_2, \ldots, F_{M-1} \} $ be a dual P-basis of $ \mathcal{B}^\prime $. Fix $ i = 1,2, \ldots, M-1 $ and define $ \mathbf{G}_i = (\mathbf{x} - \mathbf{b}_i) F_i \in \mathbb{F}[\mathbf{x}; \sigma, \delta]^n $. It holds that $ \mathbf{G}_i \in I(\mathcal{B}^\prime) $ by the product rule (Theorem \ref{th product rule}). Since $ \mathbf{b}_M \in \overline{\mathcal{B}^\prime} = Z(I(\mathcal{B}^\prime)) $, then $ \mathbf{G}_i (\mathbf{b}_M) = \mathbf{0} $. If $ F_i(\mathbf{b}_M) \neq 0 $, then
$$ \mathbf{0} = \mathbf{G}_i (\mathbf{b}_M) = \left( \mathbf{b}_M^{F_i(\mathbf{b}_M)} - \mathbf{b}_i \right) F_i(\mathbf{b}_M) = \left( \mathbf{a}^{F_i(\mathbf{b}_M) \beta_M} - \mathbf{a}^{\beta_i} \right)F_i(\mathbf{b}_M) , $$
again by the product rule (Theorem \ref{th product rule}). Now, $ \mathbf{a}^{F_i(\mathbf{b}_M) \beta_M} = \mathbf{a}^{\beta_i} $ means that $ \beta_i^{-1} F_i(\mathbf{b}_M) \beta_M \in K_\mathbf{a} $ by Definition \ref{def centralizer}. Hence in all cases ($ F_i(\mathbf{b}_M) = 0 $ or $ F_i(\mathbf{b}_M) \neq 0 $) we have that
$$ F_i(\mathbf{b}_M) = \beta_i \lambda_i \beta_M^{-1}, $$
for some $ \lambda_i \in K_\mathbf{a} $. Now, if $ F = F_1 + F_2 + \cdots + F_{M-1} \in \mathbb{F}[\mathbf{x}; \sigma, \delta] $, then
$$ F(\mathbf{b}_j) = \sum_{i=1}^{M-1} F_i(\mathbf{b}_j) = \sum_{i=1}^{M-1} \delta_{i,j} = 1, $$
for $ j = 1,2, \ldots, M-1 $, by Definition \ref{def dual P-bases}. Since $ \mathbf{b}_M \in \overline{\mathcal{B}^\prime} = Z(I(\mathcal{B}^\prime)) $, we deduce from Theorem \ref{th lagrange interpolation} that $ F(\mathbf{b}_M) = 1 $. Hence
$$ 1 = F(\mathbf{b}_M) = \sum_{i=1}^{M-1} F_i(\mathbf{b}_M) = \sum_{i=1}^{M-1} \beta_i \lambda_i \beta_M^{-1} \quad \Longleftrightarrow \quad \beta_M = \sum_{i=1}^{M-1} \beta_i \lambda_i, $$
which contradicts the right linear independence of $ \beta_1, \beta_2, \ldots, \beta_{M-1} $ over $ K_\mathbf{a} $.
\end{proof}

The second important ingredient is to ensure that P-closed sets generated by a finite set inside a single conjugacy class remain contained in such a conjugacy class. 

\begin{lemma} \label{lemma P-closed are conj closed}
If $ \mathcal{G} \subseteq \mathbb{F}^n $ is finite and $ \mathbf{b} \in \overline{\mathcal{G}} $, then $ \mathbf{b} $ is conjugate to an element in $ \mathcal{G} $.
\end{lemma}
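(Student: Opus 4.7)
The plan is to prove the contrapositive: if $\mathbf{b} \in \mathbb{F}^n$ is not conjugate to any element of $\mathcal{G} = \{\mathbf{a}_1, \ldots, \mathbf{a}_k\}$, then there exists $F \in I(\mathcal{G})$ with $F(\mathbf{b}) \neq 0$, hence $\mathbf{b} \notin \overline{\mathcal{G}}$. I would proceed by induction on $k = |\mathcal{G}|$, building the witness one root at a time using the product rule of Theorem \ref{th product rule}. For the base case $\mathcal{G} = \{\mathbf{a}\}$, non-conjugacy forces $\mathbf{b} \neq \mathbf{a}$, so $b_i \neq a_i$ for some coordinate $i$, and the degree-one skew polynomial $F = x_i - a_i$ already works: it vanishes at $\mathbf{a}$ and evaluates to $b_i - a_i \neq 0$ at $\mathbf{b}$.

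For the inductive step, write $\mathcal{G} = \mathcal{G}' \cup \{\mathbf{a}_k\}$. The inductive hypothesis supplies $F \in I(\mathcal{G}')$ with $F(\mathbf{b}) \neq 0$. If $F(\mathbf{a}_k) = 0$ we are done; otherwise set $\gamma = F(\mathbf{a}_k) \neq 0$ and $\eta = F(\mathbf{b}) \neq 0$. The key observation is that $\mathbf{a}_k^\gamma \in \mathcal{C}(\mathbf{a}_k)$ and $\mathbf{b}^\eta \in \mathcal{C}(\mathbf{b})$ live in disjoint conjugacy classes (since $\mathbf{a}_k$ and $\mathbf{b}$ are not conjugate), so they differ in some coordinate $j$. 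Setting $G = x_j - (\mathbf{a}_k^\gamma)_j$ yields a degree-one polynomial with $G(\mathbf{a}_k^\gamma) = 0$ and $G(\mathbf{b}^\eta) \neq 0$. Applying Theorem \ref{th product rule} to the product $GF$ then gives $(GF)(\mathbf{a}_i) = 0$ for $i < k$ because $F(\mathbf{a}_i) = 0$, $(GF)(\mathbf{a}_k) = G(\mathbf{a}_k^\gamma)\gamma = 0$, and $(GF)(\mathbf{b}) = G(\mathbf{b}^\eta)\eta \neq 0$, which closes the induction.

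The main obstacle I anticipate is the non-commutativity of skew multiplication: a naive product of linear factors, each vanishing on a single $\mathbf{a}_i$, need not vanish on the whole of $\mathcal{G}$ because left multiplication shifts evaluation points along conjugates. The product rule turns this liability into an asset, since conjugation keeps $\mathbf{a}_k^\gamma$ and $\mathbf{b}^\eta$ inside their original conjugacy classes, so the non-conjugacy hypothesis is preserved at every stage of the induction and a single coordinate of difference is enough to choose the multiplier $G$. Notably, this argument does not require the linearized machinery of Lemma \ref{lemma linearized independence one conj class}; it relies solely on the disjointness of conjugacy classes and on the product rule.
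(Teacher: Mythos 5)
Your proof is correct, but it takes a genuinely different route from the paper's. The paper argues the statement directly: it picks a P-basis $ \mathcal{B} = \{ \mathbf{b}_1, \ldots, \mathbf{b}_M \} \subseteq \mathcal{G} $ of $ \overline{\mathcal{G}} $ together with a dual P-basis $ \{ F_1, \ldots, F_M \} $ (Corollary \ref{corollary existence dual P-bases}), uses Corollary \ref{cor inverse projection map} to find some $ i $ with $ F_i(\mathbf{b}) \neq 0 $, and then applies Theorem \ref{th product rule} and Theorem \ref{th lagrange interpolation} to $ \mathbf{G}_i = (\mathbf{x} - \mathbf{b}_i) F_i $, which vanishes on $ \mathcal{B} $ and hence at $ \mathbf{b} $, to read off $ \mathbf{b}^{F_i(\mathbf{b})} = \mathbf{b}_i $. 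You instead prove the contrapositive by induction on $ |\mathcal{G}| $, building an explicit separating polynomial in $ I(\mathcal{G}) $ that is nonzero at $ \mathbf{b} $; your steps check out: non-conjugacy gives $ \mathbf{b} \neq \mathbf{a} $ in the base case since conjugacy is reflexive, the product rule correctly handles both the vanishing factors and the shifted evaluation points $ \mathbf{a}_k^\gamma $ and $ \mathbf{b}^\eta $, disjointness of the conjugacy classes guarantees a separating coordinate $ j $, and the division-ring hypothesis gives $ G(\mathbf{b}^\eta)\eta \neq 0 $. Your argument is more elementary and self-contained — it bypasses P-bases, dual P-bases and skew Lagrange interpolation entirely — and it is constructive, producing a witness of degree at most $ |\mathcal{G}| $. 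The paper's proof is shorter once the interpolation machinery is in place, exhibits explicitly the element of $ \mathcal{G} $ to which $ \mathbf{b} $ is conjugate, and has the additional byproduct (if $ \mathbf{b} \in \overline{\mathcal{B}} $ is not conjugate to $ \mathbf{b}_i $ then $ F_i(\mathbf{b}) = 0 $) that is reused later in the proof of Theorem \ref{theorem lin P-closed sets in several conj classes}; your contrapositive does not directly supply that refinement, though it fully suffices for the lemma as stated.
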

\begin{proof}
Let $ \mathcal{B} = \{ \mathbf{b}_1, \mathbf{b}_2, \ldots, \mathbf{b}_M \} \subseteq \mathcal{G} $ be a P-basis of $ \overline{\mathcal{G}} $ and let $ \mathcal{B}^* = \{ F_1, F_2, \ldots, F_M \} $ be a dual P-basis of $ \mathcal{B} $. There exists $ i = 1,2, \ldots, M $ such that $ F_i(\mathbf{b}) \neq 0 $, since otherwise we deduce from Corollary \ref{cor inverse projection map} that $ F(\mathbf{b}) = 0 $, for all $ F \in \mathbb{F}[\mathbf{x}; \sigma, \delta] $, which is absurd. However, if $ \mathbf{G}_i = (\mathbf{x} - \mathbf{b}_i) F_i \in \mathbb{F}[\mathbf{x}; \sigma, \delta]^n $, then $ \mathbf{G}_i \in I(\mathcal{B}) $ by the product rule (Theorem \ref{th product rule}). Since $ \mathbf{b} \in \overline{\mathcal{B}} = Z(I(\mathcal{B})) $, we deduce, again from the product rule, that
$$ \mathbf{0} = \mathbf{G}_i(\mathbf{b}) = \left( \mathbf{b}^{F_i(\mathbf{b})} - \mathbf{b}_i \right) F_i(\mathbf{b}). $$
Therefore $ \mathbf{b} $ is conjugate to $ \mathbf{b}_i \in \mathcal{G} $ and we are done.
\end{proof}

We may now give the first main result of this section, which characterizes finitely generated P-closed subsets of a conjugacy class as vector spaces.

\begin{theorem} \label{theorem linearized version of P-closed in one conj}
Let $ \mathbf{a} \in \mathbb{F}^n $. The following hold:
\begin{enumerate}
\item
If $ \mathcal{G} \subseteq \mathcal{C}(\mathbf{a}) $ is finite and $ \Omega = \overline{\mathcal{G}} \subseteq \mathbb{F}^n $, then
\begin{equation}
\Omega = \{ \mathbf{a}^\beta \mid \beta \in \Omega^{\mathcal{D}} \setminus \{ 0 \} \} \subseteq \mathcal{C}(\mathbf{a}),
\label{eq omega is linearized one conj}
\end{equation}
for a finite-dimensional right vector space $ \Omega^\mathcal{D} \subseteq \mathbb{F} $ over $ K_\mathbf{a} $.
\item
Conversely, if $ \Omega^\mathcal{D} \subseteq \mathbb{F} $ is a finite-dimensional right vector space over $ K_\mathbf{a} $, then $ \Omega \subseteq \mathcal{C}(\mathbf{a}) $ given as in (\ref{eq omega is linearized one conj}) is a finitely generated P-closed set.
\end{enumerate}
Moreover if Item 1 or 2 holds, then $ \mathcal{B}^\mathcal{D} $ is a right basis of $ \Omega^\mathcal{D} $ over $ K_\mathbf{a} $ if, and only if, $ \mathcal{B} = \{ \mathbf{a}^\beta \in \mathbb{F}^* \mid \beta \in \mathcal{B}^\mathcal{D} \} $ is a P-basis of $ \Omega $. In particular, we have that
\begin{equation}
{\rm Rk}(\Omega) = \dim^R_{K_\mathbf{a}} (\Omega^\mathcal{D}).
\label{eq rank is dimension of linearized}
\end{equation}
Thus we deduce that the map $ \Omega \mapsto \Omega^\mathcal{D} $ is a bijection between finitely generated P-closed subsets of $ \mathcal{C}(\mathbf{a}) $ and finite-dimensional righ vector subspaces of $ \mathbb{F} $ over $ K_\mathbf{a} $. 
\end{theorem}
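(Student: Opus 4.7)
The plan is to deduce both items and the moreover clauses from a single linearization dictionary that turns P-theoretic properties over the single conjugacy class $\mathcal{C}(\mathbf{a})$ into right $K_\mathbf{a}$-linear properties of exponents. The two main tools will be Lemma \ref{lemma linearized independence one conj class}, which equates P-independence of points $\mathbf{a}^{\beta_i}$ with right $K_\mathbf{a}$-linear independence of the exponents $\beta_i$, and Lemma \ref{lemma P-closed are conj closed}, which guarantees that finitely generated P-closures of subsets of $\mathcal{C}(\mathbf{a})$ stay inside $\mathcal{C}(\mathbf{a})$. Whenever $\Omega \subseteq \mathcal{C}(\mathbf{a})$, I will define
$$ \Omega^\mathcal{D} := \{0\} \cup \{ \beta \in \mathbb{F}^* \mid \mathbf{a}^\beta \in \Omega \} \subseteq \mathbb{F}, $$
and aim to show that this set is a right $K_\mathbf{a}$-vector space equal to the right $K_\mathbf{a}$-span of the exponents of any P-basis of $\Omega$.

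For Item 1, I would first apply Lemma \ref{lemma P-closed are conj closed} to $\mathcal{G} \subseteq \mathcal{C}(\mathbf{a})$ to obtain $\Omega \subseteq \mathcal{C}(\mathbf{a})$, so every element of $\Omega$ genuinely has an exponent over $\mathbf{a}$. I then fix any finite P-basis $\mathcal{B} = \{\mathbf{a}^{\beta_1}, \ldots, \mathbf{a}^{\beta_M}\}$ of $\Omega$; by Lemma \ref{lemma linearized independence one conj class}, the exponents $\beta_1, \ldots, \beta_M$ are right $K_\mathbf{a}$-linearly independent. To identify $\Omega^\mathcal{D}$ with their right $K_\mathbf{a}$-span, the inclusion ``span $\subseteq \Omega^\mathcal{D}$'' follows by writing $\beta = \sum_i \beta_i \lambda_i$ and, for any $F \in I(\Omega)$, combining Theorem \ref{theorem evaluation of skew as operator} (so that $F^\mathcal{D}(\beta_i)=0$) with the right $K_\mathbf{a}$-linearity of $F^\mathcal{D}$ (Lemma \ref{lemma lin pols are right linear}) to get $F^\mathcal{D}(\beta)=0$, hence $F(\mathbf{a}^\beta)=0$. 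The reverse inclusion is by contradiction: if $\beta \in \Omega^\mathcal{D}$ were outside that span, the enlarged family $\{\beta_1, \ldots, \beta_M, \beta\}$ would remain right $K_\mathbf{a}$-linearly independent, so via Lemma \ref{lemma linearized independence one conj class} the set $\{\mathbf{a}^{\beta_1}, \ldots, \mathbf{a}^{\beta_M}, \mathbf{a}^\beta\}$ would be a strictly larger P-independent subset of $\Omega$, contradicting the maximality of the P-basis $\mathcal{B}$ (\cite[Lemma 36]{multivariateskew}).

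For Item 2, given a basis $\{\beta_1,\ldots,\beta_M\}$ of the prescribed right $K_\mathbf{a}$-space $\Omega^\mathcal{D}$, I set $\mathcal{G} = \{\mathbf{a}^{\beta_1}, \ldots, \mathbf{a}^{\beta_M}\}$ and prove $\overline{\mathcal{G}} = \Omega$. The inclusion $\Omega \subseteq \overline{\mathcal{G}}$ is the same linearization calculation applied to $F \in I(\mathcal{G})$. For $\overline{\mathcal{G}} \subseteq \Omega$, Lemma \ref{lemma P-closed are conj closed} forces any $\mathbf{b} \in \overline{\mathcal{G}}$ to be of the form $\mathbf{a}^\gamma$; if $\gamma$ lay outside $\Omega^\mathcal{D}$, the same extension-of-independent-set argument through Lemma \ref{lemma linearized independence one conj class} would place $\mathbf{a}^\gamma$ outside $\overline{\mathcal{G}}$, a contradiction. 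The moreover clause, including (\ref{eq rank is dimension of linearized}), is then immediate from the already established equivalence between right $K_\mathbf{a}$-bases of $\Omega^\mathcal{D}$ and P-bases of $\Omega$, and the final bijection $\Omega \leftrightarrow \Omega^\mathcal{D}$ is a set-theoretic consequence of the defining equivalence $\mathbf{a}^\beta \in \Omega \Longleftrightarrow \beta \in \Omega^\mathcal{D}$.

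The main obstacle I anticipate is the ``$\subseteq$'' half of the identification of $\Omega^\mathcal{D}$ with a span in Item 1 (and its twin in Item 2): there is no direct way to project a candidate exponent onto a prescribed right $K_\mathbf{a}$-span, so the argument must be routed through the combinatorial maximality of P-bases and the contrapositive of Lemma \ref{lemma linearized independence one conj class}. Everything else is a direct application of the evaluation-linearization bridge of Theorem \ref{theorem evaluation of skew as operator} together with right $K_\mathbf{a}$-linearity.
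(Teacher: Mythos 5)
Your proposal is correct and follows essentially the same route as the paper: it rests on the same two pillars, Lemma \ref{lemma linearized independence one conj class} (P-independence of $\mathbf{a}^{\beta_i}$ equals right $K_\mathbf{a}$-independence of the $\beta_i$, used together with the maximality/extension property of P-bases from \cite[Lemma 36]{multivariateskew}) and Lemma \ref{lemma P-closed are conj closed} (finitely generated closures stay in the conjugacy class). The only cosmetic difference is that you define $\Omega^{\mathcal{D}}$ as the set of exponents and identify it with the span of a P-basis' exponents, using a direct computation via Theorem \ref{theorem evaluation of skew as operator} and Lemma \ref{lemma lin pols are right linear} for one inclusion, whereas the paper defines $\Omega^{\mathcal{D}}$ as that span and routes both inclusions through the P-(in)dependence lemma; both versions are sound.
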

\begin{proof}
Assume first the hypotheses in Item 1, and let $ \mathcal{B} = \{ \mathbf{b}_1, \mathbf{b}_2, \ldots, \mathbf{b}_M \} \subseteq \Omega $ be a P-basis of $ \Omega $. By Lemma \ref{lemma P-closed are conj closed}, we have that $ \Omega \subseteq \mathcal{C}(\mathbf{a}) $. Hence there exist $ \beta_i \in \mathbb{F}^* $ such that $ \mathbf{b}_i = \mathbf{a}^{\beta_i} $, for $ i = 1,2, \ldots, M $. Therefore, Equation (\ref{eq omega is linearized one conj}) holds for $ \Omega^\mathcal{D} = \langle \beta_1, \beta_2, \ldots, \beta_M \rangle_{K_\mathbf{a}}^R \subseteq \mathbb{F} $ by Lemma \ref{lemma linearized independence one conj class}. Thus Item 1 is proven.

Assume now the hypotheses in Item 2. Let $ \mathcal{B}^\mathcal{D} = \{ \beta_1, \beta_2, \ldots, \beta_M \} $ be a right basis of $ \Omega^\mathcal{D} $ over $ K_\mathbf{a} $, and define $ \mathbf{b}_i = \mathbf{a}^{\beta_i} \in \Omega $, for $ i = 1,2, \ldots, M $. By Lemma \ref{lemma linearized independence one conj class}, it holds that $ \mathbf{a}^\beta \in \Omega $ if, and only if, $ \beta \in \Omega^\mathcal{D} $, for all $ \beta \in \mathbb{F}^* $. Thus Item 2 is proven.

Similarly, the claims under Items 1 and 2 follow from Lemma \ref{lemma linearized independence one conj class}, and we are done.
\end{proof}

We may deduce the following important consequence. As we will show in Section \ref{sec generalizations galois and derivations}, this consequence is a generalization of Hilbert's Theorem 90 \cite[Th. 21]{artin-lectures} \cite[Th. 90]{hilbert}. The proof is straightforward from Lemma \ref{lemma linearized independence one conj class} and Theorem \ref{theorem linearized version of P-closed in one conj}, and is left to the reader. 

\begin{corollary} \label{cor first version hilbert 90}
Let $ \mathbf{a} \in \mathbb{F} $. The conjugacy class $ \mathcal{C}(\mathbf{a}) \subseteq \mathbb{F}^n $ is P-closed and finitely generated if, and only if, $ \mathbb{F} $ has finite right dimension over $ K_\mathbf{a} $. In such a case, 
$$ {\rm Rk}(\mathcal{C}(\mathbf{a})) = \dim^R_{K_\mathbf{a}}(\mathbb{F}). $$
\end{corollary}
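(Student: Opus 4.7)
The corollary is essentially the special case $\Omega = \mathcal{C}(\mathbf{a})$ of the bijection stated at the end of Theorem \ref{theorem linearized version of P-closed in one conj}. My plan is to apply that bijection in each direction and show that the right $K_\mathbf{a}$-subspace $\Omega^\mathcal{D}$ corresponding to $\Omega = \mathcal{C}(\mathbf{a})$ must be all of $\mathbb{F}$; the rank identity then falls out of equation (\ref{eq rank is dimension of linearized}).

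The easy direction is ($\Leftarrow$). If $\mathbb{F}$ is finite-dimensional as a right $K_\mathbf{a}$-vector space, take $\Omega^\mathcal{D} = \mathbb{F}$ in Item 2 of Theorem \ref{theorem linearized version of P-closed in one conj}: that item yields that $\Omega = \{\mathbf{a}^\beta : \beta \in \mathbb{F}^*\}$ is a finitely generated P-closed set, and by the description (\ref{eq def conjugacy class}) this set is exactly $\mathcal{C}(\mathbf{a})$.

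For ($\Rightarrow$), assume $\mathcal{C}(\mathbf{a})$ is P-closed and finitely generated, and fix a finite set $\mathcal{G} \subseteq \mathcal{C}(\mathbf{a})$ with $\overline{\mathcal{G}} = \mathcal{C}(\mathbf{a})$. Item 1 of Theorem \ref{theorem linearized version of P-closed in one conj} supplies a finite-dimensional right $K_\mathbf{a}$-subspace $\Omega^\mathcal{D} \subseteq \mathbb{F}$ with $\mathcal{C}(\mathbf{a}) = \{\mathbf{a}^\beta : \beta \in \Omega^\mathcal{D} \setminus \{0\}\}$, and it suffices to show $\Omega^\mathcal{D} = \mathbb{F}$. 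For any $\gamma \in \mathbb{F}^*$, one has $\mathbf{a}^\gamma \in \mathcal{C}(\mathbf{a})$, so $\mathbf{a}^\gamma = \mathbf{a}^\beta$ for some $\beta \in \Omega^\mathcal{D}\setminus\{0\}$; the main (and only non-trivial) step is then showing that $\mu := \beta^{-1}\gamma$ lies in $K_\mathbf{a}$, whence $\gamma = \beta\mu \in \beta K_\mathbf{a} \subseteq \Omega^\mathcal{D}$. To prove $\mu \in K_\mathbf{a}$, I would rewrite the equality $\mathbf{a}^\gamma = \mathbf{a}^\beta$ (using Definition \ref{def conjugacy}) as $\mathcal{D}_\mathbf{a}(\beta\mu) = \mathcal{D}_\mathbf{a}(\beta)\mu$ after right-multiplying by $\gamma$, then expand both sides through $\mathcal{D}_\mathbf{a}(x) = \sigma(x)\mathbf{a} + \delta(x)$ and the $\sigma$-derivation identity $\delta(\beta\mu) = \sigma(\beta)\delta(\mu) + \delta(\beta)\mu$. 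The $\delta(\beta)\mu$ terms match, leaving $\sigma(\beta)\mathcal{D}_\mathbf{a}(\mu) = \sigma(\beta)\mathbf{a}\mu$; cancelling the invertible matrix $\sigma(\beta)$ (with inverse $\sigma(\beta^{-1})$ since $\sigma$ is a ring morphism) gives $\mathcal{D}_\mathbf{a}(\mu) = \mathbf{a}\mu$, i.e., $\mu \in K_\mathbf{a}$ by Definition \ref{def centralizer}. This brief calculation is the only genuine obstacle; the rest of the argument is a direct invocation of Theorem \ref{theorem linearized version of P-closed in one conj}.
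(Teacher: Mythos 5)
Your proposal is correct and takes essentially the same route as the paper: both directions are read off from Theorem \ref{theorem linearized version of P-closed in one conj} by showing that the right $K_\mathbf{a}$-subspace $\Omega^\mathcal{D}$ attached to $\Omega = \mathcal{C}(\mathbf{a})$ must equal $\mathbb{F}$, with the rank identity coming from (\ref{eq rank is dimension of linearized}). The only cosmetic difference is in how $\mathbb{F} \subseteq \Omega^\mathcal{D}$ is obtained: the paper invokes Lemma \ref{lemma linearized independence one conj class} to see that every exponent is right linearly dependent on a basis of $\Omega^\mathcal{D}$, while you verify directly (correctly, via the $\sigma$-derivation identity and the invertibility of $\sigma(\beta)$) that two exponents producing the same conjugate differ by a right factor in $K_\mathbf{a}$, which is the same standard fact (cf.\ \cite[Lemma 12]{multivariateskew}) used inside the paper's own arguments.
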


We may also deduce the following important consequence. As we will show in Section \ref{sec generalizations galois and derivations}, this result generalizes Artin's Theorem \cite[Th. 14]{artin-lectures}. 

\begin{corollary} \label{cor dimensions for artin}
For all $ \mathbf{a} \in \mathbb{F}^n $, the surjective map $ \phi_\mathbf{a} $ in (\ref{eq map phi a}) satisfies that $ {\rm Ker}(\phi_\mathbf{a}) = I(\mathcal{C}(\mathbf{a})) $ (by Theorem \ref{theorem evaluation of skew as map}), thus it restricts to a left $ \mathbb{F} $-linear vector space isomorphism
\begin{equation}
\phi_\mathbf{a} : \mathbb{F}[\mathbf{x}; \sigma, \delta]/I(\mathcal{C}(\mathbf{a})) \longrightarrow \mathbb{F}[\mathcal{D}_\mathbf{a}].
\label{eq isomorphism for artin}
\end{equation}
By Corollaries \ref{cor inverse projection map} and \ref{cor first version hilbert 90}, $ \mathbb{F}[\mathcal{D}_\mathbf{a}] $ has finite left dimension over $ \mathbb{F} $ if, and only if, $ \mathbb{F} $ has finite right dimension over $ K_\mathbf{a} $, and in such a case, 
$$ \dim^L_\mathbb{F}(\mathbb{F}[\mathcal{D}_\mathbf{a}]) = \dim^L_\mathbb{F}(\mathbb{F}[\mathbf{x}; \sigma, \delta]/I(\mathcal{C}(\mathbf{a}))) = {\rm Rk}(\mathcal{C}(\mathbf{a})) = \dim^R_{K_\mathbf{a}}(\mathbb{F}) . $$ 
\end{corollary}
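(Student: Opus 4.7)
The plan is to derive the isomorphism (\ref{eq isomorphism for artin}) via the first isomorphism theorem applied to $\phi_\mathbf{a}$; all the dimension statements then cascade from this isomorphism together with Corollaries \ref{cor inverse projection map} and \ref{cor first version hilbert 90}. Surjectivity of $\phi_\mathbf{a}$ is built into the definition of $\mathbb{F}[\mathcal{D}_\mathbf{a}]$, so the crux is to identify $\ker(\phi_\mathbf{a})$ with $I(\mathcal{C}(\mathbf{a}))$.

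For the kernel, a short induction on $\mathfrak{m} \in \mathcal{M}$ using the recursion in Definition \ref{def lin multi skew pols} together with $\sigma(0)=0$ and $\delta(0)=0$ shows $\mathcal{D}_\mathbf{a}^\mathfrak{m}(0) = 0$, so $F \in \ker(\phi_\mathbf{a})$ iff $F^\mathcal{D}(\beta) = 0$ for every $\beta \in \mathbb{F}^*$. By Theorem \ref{theorem evaluation of skew as operator}, $F^\mathcal{D}(\beta) = F(\mathbf{a}^\beta)\beta$ for such $\beta$, and since $\beta$ is invertible and $\mathcal{C}(\mathbf{a})$ is, by (\ref{eq def conjugacy class}), exactly the orbit $\{\mathbf{a}^\beta \mid \beta \in \mathbb{F}^*\}$, this vanishing is equivalent to $F \in I(\mathcal{C}(\mathbf{a}))$. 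The first isomorphism theorem then yields (\ref{eq isomorphism for artin}), and hence
$$\dim^L_\mathbb{F}(\mathbb{F}[\mathcal{D}_\mathbf{a}]) = \dim^L_\mathbb{F}(\mathbb{F}[\mathbf{x};\sigma,\delta]/I(\mathcal{C}(\mathbf{a})))$$
unconditionally.

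If $\mathbb{F}$ is right finite-dimensional over $K_\mathbf{a}$, then Corollary \ref{cor first version hilbert 90} says $\mathcal{C}(\mathbf{a})$ is a finitely generated P-closed set of rank $\dim^R_{K_\mathbf{a}}(\mathbb{F})$, and Corollary \ref{cor inverse projection map} then equates $\dim^L_\mathbb{F}(\mathbb{F}[\mathbf{x};\sigma,\delta]/I(\mathcal{C}(\mathbf{a})))$ with this rank, completing the displayed chain of equalities. The main obstacle is the converse: if $\dim^L_\mathbb{F}(\mathbb{F}[\mathcal{D}_\mathbf{a}]) = M < \infty$, I cannot invoke Corollary \ref{cor first version hilbert 90} directly, since it presupposes that $\mathcal{C}(\mathbf{a})$ is finitely generated. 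The plan here is a contradiction argument: if $\mathbb{F}$ had elements $\beta_1, \ldots, \beta_{M+1}$ right linearly independent over $K_\mathbf{a}$, then by Lemma \ref{lemma linearized independence one conj class} the set $\{\mathbf{a}^{\beta_1}, \ldots, \mathbf{a}^{\beta_{M+1}}\}$ would be P-independent, so its P-closure $\Omega$ would be a finitely generated P-closed set of rank $M+1$ contained in $\mathcal{C}(\mathbf{a})$. The inclusion $I(\mathcal{C}(\mathbf{a})) \subseteq I(\Omega)$ would induce a surjection $\mathbb{F}[\mathbf{x};\sigma,\delta]/I(\mathcal{C}(\mathbf{a})) \twoheadrightarrow \mathbb{F}[\mathbf{x};\sigma,\delta]/I(\Omega)$ of left $\mathbb{F}$-vector spaces, with target of left $\mathbb{F}$-dimension $M+1$ by Corollary \ref{cor inverse projection map}; this would force $M \geq M+1$, the required contradiction, so $\dim^R_{K_\mathbf{a}}(\mathbb{F}) \leq M$ and the forward direction above closes the chain.
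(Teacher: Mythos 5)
Your proposal is correct and follows essentially the same route as the paper: the isomorphism comes from identifying $\ker(\phi_\mathbf{a})$ with $I(\mathcal{C}(\mathbf{a}))$ via Theorem \ref{theorem evaluation of skew as operator} (you merely spell out the detail the paper leaves to ``the definitions,'' including the harmless $\beta=0$ case), the forward dimension claim uses the rank computation for $\mathcal{C}(\mathbf{a})$ together with Corollary \ref{cor inverse projection map}, and your contradiction argument for the converse---producing a finitely generated P-closed subset of $\mathcal{C}(\mathbf{a})$ of rank exceeding the finite left dimension and using the surjection onto $\mathbb{F}[\mathbf{x};\sigma,\delta]/I(\Omega)$---is exactly the paper's argument, with Lemma \ref{lemma linearized independence one conj class} invoked directly where the paper cites Theorem \ref{theorem linearized version of P-closed in one conj}.
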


We also deduce that the set of P-closed subsets of a conjugacy class forms a lattice that is isomorphic to the lattice of right projective subspaces of $ \mathbb{P}_{K_\mathbf{a}}(\mathbb{F}) $.

\begin{corollary} \label{cor lattice projective spaces}
Let $ \mathbf{a} \in \mathbb{F} $, and define the sum of two finitely generated P-closed sets $ \Omega_1, \Omega_2 \subseteq \mathcal{C}(\mathbf{a}) $ as $ \Omega_1 + \Omega_2 = \overline{\Omega_1 \cup \Omega_2} \subseteq \mathcal{C}(\mathbf{a}) $. The collection of finitely generated P-closed subsets of $ \mathcal{C}(\mathbf{a}) $ forms a lattice with sums and intersections isomorphic to the lattice of finite-dimensional right $ K_\mathbf{a} $-linear projective subspaces of $ \mathbb{P}^R_{K_\mathbf{a}}(\mathbb{F}) $ via the bijection
\begin{equation*}
\begin{array}{rccc}
\pi_\mathbf{a} : & \mathbb{P}^R_{K_\mathbf{a}}(\mathbb{F}) & \longrightarrow & \mathcal{C}(\mathbf{a}) \\
 & \left[ \beta \right] & \mapsto & \mathbf{a}^\beta,
\end{array}
\end{equation*}
where $ \left[ \beta \right] = \{ \beta \lambda \in \mathbb{F}^* \mid \lambda \in K_\mathbf{a}^* \} $ and $ \mathbb{P}^R_{K_\mathbf{a}}(\mathbb{F}) = \{ [\beta] \mid \beta \in \mathbb{F}^* \} $. 
\end{corollary}

We now explore what happens when we ``glue'' several conjugacy classes together. We start with the following lemma, which extends \cite[Th. 22]{lam}.

\begin{lemma} \label{lemma P-independent several conj class}
If $ \mathcal{B}_1, \mathcal{B}_2 \subseteq \mathbb{F}^n $ are finite non-empty P-independent sets such that no element in $ \mathcal{B}_1 $ is conjugate to an element in $ \mathcal{B}_2 $, then $ \mathcal{B} = \mathcal{B}_1 \cup \mathcal{B}_2 $ is P-independent.
\end{lemma}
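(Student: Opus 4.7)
The plan is to prove the statement by exhibiting, for each $ \mathbf{b} \in \mathcal{B} $, a single skew polynomial that vanishes on $ \mathcal{B} \setminus \{ \mathbf{b} \} $ but not at $ \mathbf{b} $. Since the hypothesis on conjugacy classes is symmetric in $ \mathcal{B}_1 $ and $ \mathcal{B}_2 $, I may and will focus on the case $ \mathbf{b} \in \mathcal{B}_1 $. Because $ \mathcal{B}_1 $ itself is P-independent, Corollary \ref{corollary existence dual P-bases} provides a dual P-basis of $ \mathcal{B}_1 $; let $ F $ be the element of that dual basis corresponding to $ \mathbf{b} $, so that $ F(\mathbf{b}) = 1 $ and $ F $ vanishes on $ \mathcal{B}_1 \setminus \{ \mathbf{b} \} $. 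The desired witness will have the form $ P = GF $ for a suitable $ G \in \mathbb{F}[\mathbf{x}; \sigma, \delta] $ to be chosen.

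Applying the product rule (Theorem \ref{th product rule}), one immediately checks that $ P $ vanishes on $ \mathcal{B}_1 \setminus \{ \mathbf{b} \} $ regardless of $ G $, and that $ P(\mathbf{b}) = G(\mathbf{b}^1) F(\mathbf{b}) = G(\mathbf{b}) $ since $ \mathbf{b}^1 = \mathbf{b} $. For $ \mathbf{c} \in \mathcal{B}_2 $, the product rule gives $ P(\mathbf{c}) = 0 $ when $ F(\mathbf{c}) = 0 $, and $ P(\mathbf{c}) = G(\mathbf{c}^{F(\mathbf{c})}) F(\mathbf{c}) $ otherwise. Thus it suffices to find $ G $ that vanishes on the finite auxiliary set
$$
\Psi = \{ \mathbf{c}^{F(\mathbf{c})} \mid \mathbf{c} \in \mathcal{B}_2, \ F(\mathbf{c}) \neq 0 \} \subseteq \mathbb{F}^n
$$
while satisfying $ G(\mathbf{b}) \neq 0 $; equivalently, to show that $ \mathbf{b} \notin \overline{\Psi} $, since in that case such a $ G \in I(\Psi) $ exists by definition of the P-closure.

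This is where the conjugacy hypothesis enters decisively. By construction, every element of $ \Psi $ is a conjugate of some $ \mathbf{c} \in \mathcal{B}_2 $, so applying Lemma \ref{lemma P-closed are conj closed} to the finite set $ \Psi $ tells us that every element of $ \overline{\Psi} $ is conjugate to some element of $ \mathcal{B}_2 $. Since $ \mathbf{b} \in \mathcal{B}_1 $ is assumed not to be conjugate to any element of $ \mathcal{B}_2 $, we conclude $ \mathbf{b} \notin \overline{\Psi} $, which produces $ G $ and hence $ P $ with all required properties.

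The step I expect to be the main obstacle is not the existence of $ G $ per se but rather identifying the correct auxiliary set: the ``conjugation twist'' $ \mathbf{c} \mapsto \mathbf{c}^{F(\mathbf{c})} $ introduced by the product rule prevents one from naively taking $ G \in I(\mathcal{B}_2) $, and one must work with $ \Psi $ instead. Crucially, the twist keeps $ \Psi $ inside the union of the conjugacy classes met by $ \mathcal{B}_2 $, and Lemma \ref{lemma P-closed are conj closed} then does exactly the work needed to pass from $ \Psi $ to $ \overline{\Psi} $ without leaving those classes; the rest of the argument is a straightforward product-rule bookkeeping.
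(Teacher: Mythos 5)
Your proof is correct, but it follows a genuinely different route from the paper. The paper argues by induction on $|\mathcal{B}_1|+|\mathcal{B}_2|$ and by contradiction: assuming some $\mathbf{c}\in\mathcal{B}_2$ lies in the P-closure of the rest, it takes dual P-bases both of the remaining union (available only via the induction hypothesis) and of $\mathcal{B}_2$, uses the product rule together with non-conjugacy to force the dual polynomials of $\mathcal{B}_1$ to vanish at $\mathbf{c}$, and then manufactures a polynomial whose evaluation yields $0=1$. You instead give a direct construction of a separating polynomial $P=GF$ for each $\mathbf{b}$: the dual P-basis is needed only for the set containing $\mathbf{b}$ (which is P-independent by hypothesis, so no induction is required), the product-rule ``twist'' $\mathbf{c}\mapsto\mathbf{c}^{F(\mathbf{c})}$ is absorbed into the auxiliary set $\Psi$, and the conjugacy hypothesis is invoked exactly once, through Lemma \ref{lemma P-closed are conj closed} applied to $\Psi$ (plus transitivity of conjugacy), to get $\mathbf{b}\notin\overline{\Psi}$ and hence the existence of $G$. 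This is legitimate since Lemma \ref{lemma P-closed are conj closed} is established before the statement and its proof does not depend on it. Your version buys a shorter, induction-free argument that isolates where non-conjugacy is used, and as a byproduct it only needs P-independence of the set containing the point being separated; the paper's version stays closer to the purely interpolation-theoretic toolkit (dual P-bases and Theorem \ref{th lagrange interpolation}) and mirrors the univariate argument of Lam. The only points you leave implicit are harmless: the degenerate case $\Psi=\varnothing$ (where $\overline{\Psi}=\varnothing$ and $G=1$ works) and the use of transitivity of conjugacy, which holds because conjugacy is an equivalence relation.
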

\begin{proof}
Let $ \mathcal{B}_1 = \{ \mathbf{b}_1, \mathbf{b}_2, \ldots, \mathbf{b}_M \} $ and $ \mathcal{B}_2 = \{ \mathbf{c}_1, \mathbf{c}_2, \ldots, \mathbf{c}_N \} $, where $ M = |\mathcal{B}_1| \geq 1 $ and $ N = |\mathcal{B}_2| \geq 1 $. We proceed by induction on $ k = M + N $. The case $ k = 2 $ ($ M = N = 1 $) is trivial, since any set of two elements is P-independent. Assume now that the lemma holds for some $ k \geq 2 $, but not for $ k+1 $. By Lemma \ref{lemma P-closed are conj closed}, we may assume that $ N+1 = | \mathcal{B}_2| $ and $ \mathbf{c}_{N+1} \in \overline{\mathcal{B}} $, where $ \mathcal{B} = \{ \mathbf{b}_1, \mathbf{b}_2, \ldots, \mathbf{b}_M, \mathbf{c}_1, \mathbf{c}_2, \ldots, \mathbf{c}_N \} $.

By induction hypothesis, $ \mathcal{B} $ and $ \mathcal{B}_2 $ are P-independent, thus we may take dual P-bases $ \mathcal{B}^* = \{ F_1, F_2, \ldots, F_M, G_1, G_2, \ldots, G_N \} $ and $ \mathcal{B}_2^* = \{ H_1, H_2, \ldots, $ $ H_{N+1} \} $. 

First we prove that $ F_i(\mathbf{c}_{N+1}) = 0 $, for all $ i = 1,2, \ldots, M $. Assume that $ F_i(\mathbf{c}_{N+1}) \neq 0 $ for certain $ i $. It holds that $ \mathbf{G}_i = (\mathbf{x} - \mathbf{b}_i) F_i \in I(\mathcal{B})^n $, and since $ \mathbf{c}_{N+1} \in \overline{\mathcal{B}} $, then
$$ \mathbf{0} = \mathbf{G}_i(\mathbf{c}_{N+1}) = \left( \mathbf{c}_{N+1}^{F_i(\mathbf{c}_{N+1})} - \mathbf{b}_i \right) F_i(\mathbf{c}_{N+1}), $$
hence $ \mathbf{c}_{N+1} $ and $ \mathbf{b}_i $ are conjugate, which is a contradiction. Next define
$$ F = H_{N+1} - \sum_{i=1}^M H_{N+1}(\mathbf{b}_i) F_i. $$
It holds that $ F(\mathbf{b}_i) = F(\mathbf{c}_j) = 0 $, for all $ i = 1,2, \ldots, M $ and all $ j = 1,2, \ldots, N $. That is, $ F \in I(\mathcal{B}) $, and since $ \mathbf{c}_{N+1} \in \overline{\mathcal{B}} $, we have that $ F(\mathbf{c}_{N+1}) = 0 $. In other words,
$$ 0 = F(\mathbf{c}_{N+1}) = H_{N+1}(\mathbf{c}_{N+1}) - \sum_{i=1}^M H_{N+1}(\mathbf{b}_i) F_i(\mathbf{c}_{N+1}) = 1 - 0, $$
which is absurd, and we are done.
\end{proof}

We may now state and prove the second main result of this section.

\begin{theorem} \label{theorem lin P-closed sets in several conj classes}
If $ \Omega \subseteq \mathbb{F}^n $ is P-closed and finitely generated, then so is $ \Omega \cap \mathcal{C}(\mathbf{a}) $ for all $ \mathbf{a} \in \mathbb{F}^n $. Conversely, if the sets $ \Omega_i \subseteq \mathcal{C}(\mathbf{a}_i) $ are P-closed and finitely generated, for $ i = 1,2, \ldots, \ell $, where $ \mathbf{a}_1, \mathbf{a}_2, \ldots, \mathbf{a}_\ell \in \mathbb{F}^n $ are pair-wise non-conjugate, then $ \Omega = \Omega_1 \cup \Omega_2 \cup \ldots \cup \Omega_\ell $ is P-closed and finitely generated.

In addition, if $ \mathcal{B}_i $ is a P-basis of $ \Omega_i $, for $ i = 1,2, \ldots, \ell $, then $ \mathcal{B} = \mathcal{B}_1 \cup \mathcal{B}_2 \cup \ldots \cup \mathcal{B}_\ell $ is a P-basis of $ \Omega $, and in particular we have that
$$ {\rm Rk}(\Omega) = {\rm Rk}(\Omega_1) + {\rm Rk}(\Omega_2) + \cdots + {\rm Rk}(\Omega_\ell) . $$

By Corollary \ref{cor lattice projective spaces}, the collection of finitely generated P-closed subsets of $ \mathcal{C}(\mathbf{a}_1) \cup  \mathcal{C}(\mathbf{a}_2) \cup \ldots \cup \mathcal{C}(\mathbf{a}_\ell) $ forms a lattice isomorphic to the Cartesian product of the lattices of finite-dimensional right $ K_{\mathbf{a}_i} $-linear projective subspaces of $ \mathbb{P}^R_{K_{\mathbf{a}_i}}(\mathbb{F}) $, for $ i = 1,2, \ldots, \ell $.
\end{theorem}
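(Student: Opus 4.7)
The plan is to prove the converse direction together with the additional claim on P-bases first, and then deduce the forward direction from it. For the converse, set $\mathcal{B} = \mathcal{B}_1 \cup \mathcal{B}_2 \cup \cdots \cup \mathcal{B}_\ell$ and show that $\mathcal{B}$ is a P-basis of $\Omega = \Omega_1 \cup \Omega_2 \cup \cdots \cup \Omega_\ell$. Each $\mathcal{B}_i$ is finite since $\Omega_i$ is finitely generated, so $\mathcal{B}$ is finite; moreover the $\mathcal{B}_i$'s lie in pairwise disjoint conjugacy classes, so they are pairwise disjoint, and hence the rank equality $ {\rm Rk}(\Omega) = \sum_i {\rm Rk}(\Omega_i) $ will be immediate from $ \# \mathcal{B} = \sum_i \# \mathcal{B}_i $ once we establish that $\mathcal{B}$ is a P-basis of $\Omega$.

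To see that $\mathcal{B}$ is P-independent, I would induct on $\ell$, applying Lemma \ref{lemma P-independent several conj class} to the P-independent sets $\mathcal{B}_1 \cup \cdots \cup \mathcal{B}_{\ell-1}$ and $\mathcal{B}_\ell$: by the hypothesis on the $\mathbf{a}_i$'s, no element of the first is conjugate to an element of the second. To see that $\overline{\mathcal{B}} = \Omega$, the inclusion $\bigcup_i \Omega_i = \bigcup_i \overline{\mathcal{B}_i} \subseteq \overline{\mathcal{B}}$ is obvious. For the reverse inclusion, take $\mathbf{c} \in \overline{\mathcal{B}}$; Lemma \ref{lemma P-closed are conj closed} (applied to the finite set $\mathcal{B}$) places $\mathbf{c}$ in a unique conjugacy class $\mathcal{C}(\mathbf{a}_i)$. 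If $\mathbf{c} \notin \Omega_i = \overline{\mathcal{B}_i}$, then $\mathcal{B}_i \cup \{\mathbf{c}\}$ would be P-independent by \cite[Lemma 36]{multivariateskew}, and one more application of Lemma \ref{lemma P-independent several conj class} would make $\mathcal{B} \cup \{\mathbf{c}\}$ P-independent---contradicting $\mathbf{c} \in \overline{\mathcal{B}}$. Thus $\mathbf{c} \in \Omega_i$, which settles the converse and the P-basis claim in one go.

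For the forward direction, take any P-basis $\mathcal{B}$ of $\Omega$ (finite since $\Omega$ is finitely generated) and partition it as $\mathcal{B} = \bigsqcup_i \mathcal{B}_i$ according to the distinct conjugacy classes $\mathcal{C}(\mathbf{a}_i)$ that $\mathcal{B}$ meets. Each $\mathcal{B}_i$ is P-independent as a subset of $\mathcal{B}$, and $\Omega_i := \overline{\mathcal{B}_i} \subseteq \mathcal{C}(\mathbf{a}_i)$ by Lemma \ref{lemma P-closed are conj closed}. By the converse already proven, $\Omega = \overline{\mathcal{B}} = \bigcup_i \Omega_i$. For any $\mathbf{a} \in \mathbb{F}^n$, either $\mathbf{a}$ is conjugate to some $\mathbf{a}_i$, in which case $\Omega \cap \mathcal{C}(\mathbf{a}) = \Omega_i$ is P-closed and finitely generated, or $\mathbf{a}$ is not conjugate to any $\mathbf{a}_i$, in which case $\Omega \cap \mathcal{C}(\mathbf{a}) = \emptyset$ by Lemma \ref{lemma P-closed are conj closed}.

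The main obstacle is the step $\overline{\mathcal{B}} \subseteq \bigcup_i \Omega_i$ inside the converse, since finite unions of P-closed sets are not automatically P-closed. The resolution is the interplay between Lemma \ref{lemma P-closed are conj closed}, which pins down the conjugacy class of a hypothetical extra point, and the iterated use of Lemma \ref{lemma P-independent several conj class}, which prevents that point from being P-dependent on $\mathcal{B}$ while failing to lie in the single-class closure $\Omega_i$.
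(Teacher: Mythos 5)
Your proposal is correct, but it takes a genuinely different route at the crucial step. For the inclusion $\overline{\mathcal{B}} \subseteq \Omega_1 \cup \cdots \cup \Omega_\ell$, the paper argues constructively: given $\mathbf{b} \in \overline{\mathcal{B}}$ conjugate to an element of $\mathcal{B}_j$, it takes a dual P-basis $\{F_1,\ldots,F_M\}$ of $\mathcal{B}' = \bigcup_{i\neq j}\mathcal{B}_i$ and shows, for every $F \in I(\mathcal{B}_j)$, that $G = F - \sum_i F(\mathbf{b}_i)F_i$ lies in $I(\mathcal{B})$ and that $F_i(\mathbf{b})=0$ (via the proof of Lemma \ref{lemma P-closed are conj closed}), forcing $F(\mathbf{b})=0$ and hence $\mathbf{b}\in\overline{\mathcal{B}_j}=\Omega_j$. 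You instead run a purely combinatorial exchange argument: if $\mathbf{c}\in\overline{\mathcal{B}}$ lay outside $\Omega_i=\overline{\mathcal{B}_i}$, then $\mathcal{B}_i\cup\{\mathbf{c}\}$ would be P-independent by \cite[Lemma 36]{multivariateskew}, and a second application of Lemma \ref{lemma P-independent several conj class} (to this set and $\bigcup_{k\neq i}\mathcal{B}_k$, which lie in disjoint classes) would make $\mathcal{B}\cup\{\mathbf{c}\}$ P-independent, contradicting $\mathbf{c}\in\overline{\mathcal{B}}$. This is valid—your use of \cite[Lemma 36]{multivariateskew} matches exactly how the paper itself invokes it elsewhere—and it avoids dual P-bases in this step. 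You also derive the first sentence of the theorem from the converse together with Lemma \ref{lemma P-closed are conj closed}, whereas the paper obtains it from Theorem \ref{theorem linearized version of P-closed in one conj} and Lemma \ref{lemma P-subset is ftly grated}; your route keeps the whole proof inside the combinatorial lemmas and makes the finiteness of the class decomposition explicit. The trade-off: the paper's construction exhibits the interpolating polynomials explicitly and stays closer in spirit to the proof of Lemma \ref{lemma P-independent several conj class}, while yours is shorter and more matroid-like. The only gaps are cosmetic and at the same level of implicitness as the paper: subsets of P-independent sets are P-independent, the case $\mathbf{c}\in\mathcal{B}$ or $\bigcup_{k\neq i}\mathcal{B}_k=\varnothing$ is trivial (Lemma \ref{lemma P-independent several conj class} requires non-empty sets), and some $\mathcal{B}_i$ may be empty if an $\Omega_i$ is.
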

\begin{proof}
Let $ \Omega $ be P-closed and finitely generated, and assume that $ \Omega \cap \mathcal{C}(\mathbf{a}) \neq \varnothing $. Let $ \mathcal{G} \subseteq \Omega \cap \mathcal{C}(\mathbf{a}) $ be a finite maximal P-independent set, which exists by \cite[Cor. 37]{multivariateskew}. By Lemma \ref{lemma P-closed are conj closed}, $ \overline{\mathcal{G}} \subseteq \Omega \cap \mathcal{C}(\mathbf{a}) $. By maximality of $ \mathcal{G} $ and by \cite[Lemma 36]{multivariateskew}, we conclude that $ \overline{\mathcal{G}} = \Omega \cap \mathcal{C}(\mathbf{a}) $, thus $ \Omega \cap \mathcal{C}(\mathbf{a}) $ is P-closed and finitely generated.

Now let $ \Omega_i \subseteq \mathcal{C}(\mathbf{a}_i) $ be P-closed sets with finite P-bases $ \mathcal{B}_i $, for $ i = 1,2, \ldots, \ell $, as in the theorem, and define $ \Omega = \Omega_1 \cup \Omega_2 \cup \ldots \cup \Omega_\ell $. First $ \mathcal{B} = \mathcal{B}_1 \cup \mathcal{B}_2 \cup \ldots \cup \mathcal{B}_\ell $ is P-independent by Lemma \ref{lemma P-independent several conj class}, hence we are done if we prove that $ \Omega = \overline{\mathcal{B}} $. First, $ \Omega = \overline{\mathcal{B}}_1 \cup \overline{\mathcal{B}}_2 \cup \ldots \cup \overline{\mathcal{B}}_\ell \subseteq \overline{\mathcal{B}} $. Hence, we only need to prove the reversed inclusion $ \overline{\mathcal{B}} \subseteq \Omega $.

Let $ \mathbf{b} \in \overline{\mathcal{B}} $. By Lemma \ref{lemma P-closed are conj closed}, there exists $ j = 1,2, \ldots, \ell $ such that $ \mathbf{b} $ is conjugate to an element in $ \mathcal{B}_j $. Define $ \mathcal{B}^\prime = \bigcup_{i \neq j} \mathcal{B}_i = \{ \mathbf{b}_1, \mathbf{b}_2, \ldots, \mathbf{b}_M \} $ and let $ \mathcal{B}^{\prime *} = \{ F_1, F_2, \ldots, F_M \} $ be one of its dual P-bases. Let $ F \in I(\mathcal{B}_j) $ and define
$$ G = F - \sum_{i=1}^M F(\mathbf{b}_i) F_i. $$
Since no element in $ \mathcal{B}_j \cup \{ \mathbf{b} \} $ is conjugate to an element in $ \mathcal{B}^\prime $, it must hold that $ F_i(\mathbf{c}) = 0 $ as in the proof of Lemma \ref{lemma P-closed are conj closed}, for all $ \mathbf{c} \in \mathcal{B}_j \cup \{ \mathbf{b} \} $ and all $ i = 1,2, \ldots, M $. In particular, $ G \in I(\mathcal{B}) $, and thus $ G(\mathbf{b}) = 0 $. Hence $ F(\mathbf{b}) = 0 $ and $ \mathbf{b} \in \overline{\mathcal{B}}_j = \Omega_j \subseteq \Omega $.
\end{proof}

\section{Skew and linearized polynomial arithmetic} \label{sec skew and lin polynomial arithmetic}

In this section, we study and relate the arithmetic of the rings of skew polynomials (Definition \ref{def free skew polynomials}) and linearized polynomials (Definition \ref{def lin multi skew pols}). We show that, on one conjugacy class, products of skew polynomials are mapped onto compositions of linearized polynomials (Theorem \ref{th skew pol is composition}) and matrix products (Theorem \ref{th composition is matrix multiplication}). Second, we show that, over several conjugacy classes, products of skew polynomials decompose into coordinate-wise compositions of linearized polynomials and products of matrices (Theorem \ref{th product decompositions}). Throughout this section, for $ \mathbf{a} \in \mathbb{F}^n $, we consider $ \mathbb{F}[\mathcal{D}_\mathbf{a}] $ as a ring with map composition $ \circ $. 

\begin{theorem} \label{th skew pol is composition}
For $ \mathbf{a} \in \mathbb{F} $ and $ F , G \in \mathbb{F}[\mathbf{x}; \sigma, \delta] $, it holds that
\begin{equation}
(FG)^{\mathcal{D}_\mathbf{a}} = F^{\mathcal{D}_\mathbf{a}} \circ G^{\mathcal{D}_\mathbf{a}} .
\label{eq skew pol product is composition}
\end{equation}
\end{theorem}
\begin{proof}
The theorem follows from the universal property of $ \mathbb{F}[\mathbf{x}; \sigma, \delta] $ (Lemma \ref{lemma universal property}), since we have that, for all $ \beta \in \mathbb{F} $ and all $ i = 1,2, \ldots, n $,
$$ \mathcal{D}_\mathbf{a}^{x_i} \circ (\beta {\rm Id}) = \sum_{j=1}^n \sigma_{i,j}(\beta) \mathcal{D}_\mathbf{a}^{x_j} + \delta_i(\beta) {\rm Id} . $$
\end{proof}

Now we turn to matrix multiplication. For simplicity, we will fix $ \mathbf{a} \in \mathbb{F}^n $ and assume that $ M = \dim^R_{K_\mathbf{a}}(\mathbb{F}) < \infty $. We will also fix a right ordered basis $ \boldsymbol\beta = ( \beta_{1}, \beta_{2}, \ldots, $ $ \beta_M ) \in \mathbb{F}^M $ of $ \mathbb{F} $ over $ K_\mathbf{a} $. Define the map $ \mu_{\boldsymbol\beta} : \mathbb{F}^M \longrightarrow K_\mathbf{a}^{M \times M} $ by 
\begin{equation}
\mu_{\boldsymbol\beta} \left(  \mathbf{x}  \right) = \left( \begin{array}{cccc}
x^1_1 & x^1_2 & \ldots & x^1_M \\
x^2_1 & x^2_2 & \ldots & x^2_M \\
\vdots & \vdots & \ddots & \vdots \\
x^M_1 & x^M_2 & \ldots & x^M_M \\
\end{array} \right),
\label{eq def matrix representation map}
\end{equation}
for $ \mathbf{x} = (x_1, x_2, \ldots, x_M) \in \mathbb{F}^M $, where $ x_j^1, x_j^2, \ldots, x_j^M \in K_\mathbf{a} $ are the unique scalars such that $ x_j = \sum_{i=1}^M \beta_i x_j^i \in \mathbb{F} $, for $ j = 1,2, \ldots, M $. Observe that $ \mu_{\boldsymbol\beta} $ is a right $ K_\mathbf{a} $-linear vector space isomorphism, and it is the identity map if $ M = 1 $ and $ \beta_1 = 1 $. 
	
Now, given $ \mathbf{x}, \mathbf{y} \in \mathbb{F}^M $, we define their \textit{matrix product} with respect to $ \boldsymbol\beta $ as
\begin{equation}
\mathbf{x} \star \mathbf{y} = \mu_{\boldsymbol\beta}^{-1}(\mu_{\boldsymbol\beta}(\mathbf{x}) \mu_{\boldsymbol\beta}(\mathbf{y})) \in \mathbb{F}^M ,
\label{eq def matrix-schur products}
\end{equation}
which depends on $ K_\mathbf{a} \subseteq \mathbb{F} $ and $ \boldsymbol\beta $. We will not denote this dependency for simplicity. 

From the definitions, we note also that, if $ \mathbf{x} = (x_{1}, x_{2}, \ldots, x_M) \in \mathbb{F}^M $ and $ \mathbf{y} = \sum_{i=1}^M \beta_{i} \mathbf{y}^i \in \mathbb{F}^M $, with $ x_{i} \in \mathbb{F} $ and $ \mathbf{y}^i \in K_\mathbf{a}^M $, for $ i = 1,2, \ldots, M $, then
\begin{equation}
\mu_{\boldsymbol\beta}^{-1}(\mu_{\boldsymbol\beta}(\mathbf{x}) \mu_{\boldsymbol\beta}(\mathbf{y})) = \sum_{i=1}^M x_i \mathbf{y}^i \in \mathbb{F}^M.
\label{eq rewriting matrix-Schur product}
\end{equation}

We may now prove the second main result of this section.

\begin{theorem} \label{th composition is matrix multiplication}
With notation and assumptions as above, for all $ F^{\mathcal{D}_\mathbf{a}}, G^{\mathcal{D}_\mathbf{a}} \in \mathbb{F}[\mathcal{D}_\mathbf{a}] $,
\begin{equation}
\left( F^{\mathcal{D}_\mathbf{a}} \circ G^{\mathcal{D}_\mathbf{a}} \right) (\boldsymbol\beta) = F^{\mathcal{D}_\mathbf{a}}(\boldsymbol\beta) \star G^{\mathcal{D}_\mathbf{a}}(\boldsymbol\beta) .
\label{eq composition is matrix multiplication}
\end{equation}
\end{theorem}
\begin{proof}
Let $ \mathbf{y} = G^{\mathcal{D}_\mathbf{a}}(\boldsymbol\beta) \in \mathbb{F}^M $ and let $ \mathbf{y}^i \in K_\mathbf{a}^M $, for $ i = 1,2, \ldots, M $, be the unique vectors such that $ \mathbf{y} = \sum_{i=1}^M \beta_i \mathbf{y}^i $. Then (\ref{eq composition is matrix multiplication}) follows from
$$ F^{\mathcal{D}_\mathbf{a}}(\mathbf{y}) = \sum_{i=1}^M F^{\mathcal{D}_\mathbf{a}}(\beta_i) \mathbf{y}^i = F^{\mathcal{D}_\mathbf{a}}(\boldsymbol\beta) \star \mathbf{y}, $$
where the first equality follows from Lemma \ref{lemma lin pols are right linear}, and the second equality is (\ref{eq rewriting matrix-Schur product}).
\end{proof}

Combining Theorems \ref{th skew pol is composition} and \ref{th composition is matrix multiplication}, we deduce the following.

\begin{corollary} \label{cor canonical ring isomorphisms in one conjugacy class}
With notation and assumptions as above, and considering $ \mathbb{F}^M $ as a ring with product $ \star_{\boldsymbol\beta} $, we have the following chain of natual ring isomorphisms
$$ \mathbb{F}[\mathbf{x}; \sigma, \delta]/I(\mathcal{C}(\mathbf{a})) \stackrel{ \phi_\mathbf{a} }{ \longrightarrow } \mathbb{F}[\mathcal{D}_\mathbf{a}] \stackrel{ E^L_{\boldsymbol\beta} }{ \longrightarrow } \mathbb{F}^M \stackrel{ \mu_{\boldsymbol\beta} }{ \longrightarrow } K_\mathbf{a}^{M \times M} , $$
where $ \phi_\mathbf{a} $ is as in (\ref{eq isomorphism for artin}), and $ E^L_{\boldsymbol\beta} $ is as in Definition \ref{def lin evaluation}. In particular, the rings above are simple \cite[Def. (2.1)]{lam-book} by \cite[Th. (3.1)]{lam-book}.
\end{corollary}

We conclude with the following decomposition theorem.

\begin{theorem} \label{th product decompositions}
Let $ \mathbf{a}_1, \mathbf{a}_2, \ldots, \mathbf{a}_\ell \in \mathbb{F}^n $ be pair-wise non-conjugate, and define $ \Omega = \mathcal{C}(\mathbf{a}_1) \cup \mathcal{C}(\mathbf{a}_2) \cup \ldots \cup \mathcal{C}(\mathbf{a}_\ell) $. The maps
\begin{equation}
\begin{array}{ccccc}
 \mathbb{F}[\mathbf{x}; \sigma, \delta] / I(\Omega) & \longrightarrow & \bigoplus_{i=1}^\ell \mathbb{F}[\mathbf{x}; \sigma, \delta] / I(\mathcal{C}(\mathbf{a}_i)) & \longrightarrow &  \bigoplus_{i=1}^\ell \mathbb{F}[\mathcal{D}_{\mathbf{a}_i}] \\
F + I(\Omega) & \mapsto & \left( F + I(\mathcal{C}(\mathbf{a}_i)) \right)_{i=1}^\ell & \mapsto & \left( F^{\mathcal{D}_{\mathbf{a}_i}} \right)_{i=1}^\ell
\end{array}
\label{eq product decomposition for omega}
\end{equation}
are left $ \mathbb{F} $-linear ring isomorphisms. In particular, if $ \mathbb{F} $ has finite right dimension over $ K_{\mathbf{a}_i} $, for $ i = 1,2, \ldots, \ell $, then $ \mathbb{F}[\mathbf{x}; \sigma, \delta] / I(\Omega) $ is a semisimple ring \cite[Def. (2.5)]{lam-book} by \cite[(3.3)]{lam-book}, \cite[(3.4)]{lam-book} and Corollary \ref{cor canonical ring isomorphisms in one conjugacy class}. 
\end{theorem}
\begin{proof}
It follows by Corollary \ref{cor canonical ring isomorphisms in one conjugacy class} and $ I(\Omega) = I(\mathcal{C}(\mathbf{a}_1)) \cap I(\mathcal{C}(\mathbf{a}_2)) \cap \ldots \cap I(\mathcal{C}(\mathbf{a}_\ell)) $.
\end{proof}

\section{Generalizations of Vandermonde, Moore and Wronskian matrices} \label{sec generalizations Vandermonde, Moore, Wronskian}

One of the main objectives behind the results on evaluations of univariate skew polynomials in \cite{lam, lam-leroy} was to generalize the classical rank computations of Vandermonde \cite{lam}, Moore \cite{moore, orespecial} and Wronskian \cite[Def. 1.11]{singer} matrices. A general and explicit method for calculating their ranks was obtained by combining \cite[Th. 4.5]{lam-leroy} and \cite[Th. 23]{lam}, which amount to linearizing the concept of P-independence in the univariate case, as done in Section \ref{sec linearized P-closed sets} for the multivariate case (Theorems \ref{theorem linearized version of P-closed in one conj} and \ref{theorem lin P-closed sets in several conj classes}).

An extension of general Vandermonde matrices as in \cite{lam, lam-leroy} using skew evaluations of multivariate skew polynomials (Definition \ref{def skew evaluation}) was given in \cite[Def. 40]{multivariateskew}. See Definition \ref{def vandermonde skew}. In this section (Definition \ref{def lin vandermonde matrices}), we give a definition using linearized evaluations as in Definition \ref{def lin evaluation}, and relate both Vandermonde matrices in Theorem \ref{th transforming skew to lin vandermonde}. In Theorem \ref{th rank of lin vandermonde matrix}, we combine Theorems \ref{theorem linearized version of P-closed in one conj} and \ref{theorem lin P-closed sets in several conj classes} to provide a general and explicit method for finding the rank of multivariate Vandermonde matrices as in Definitions \ref{def vandermonde skew} and \ref{def lin vandermonde matrices}.

\begin{definition} [\textbf{Skew Vandermonde matrices \cite{lam, lam-leroy, multivariateskew}}] \label{def vandermonde skew}
Let $ \mathcal{N} \subseteq \mathcal{M} $ be a finite set of skew monomials and let $ \mathcal{B} = \{ \mathbf{b}_1, \mathbf{b}_2, \ldots, \mathbf{b}_M \} \subseteq \mathbb{F}^n $. We define the skew Vandermonde matrix $ V_{\mathcal{N}}(\mathcal{B}) $ as the $ | \mathcal{N} | \times M $ matrix formed by the rows
$$ (N_\mathfrak{m}(\mathbf{b}_1), N_\mathfrak{m}(\mathbf{b}_2), \ldots, N_\mathfrak{m}(\mathbf{b}_M)) \in \mathbb{F}^M, $$
for all $ \mathfrak{m} \in \mathcal{N} $ (given certain ordering in $ \mathcal{N} $). For a positive integer $ d $, it will be useful to define $ \mathcal{M}_d $ as the set of monomials of degree less than $ d $.
\end{definition}

The following result is \cite[Prop. 41]{multivariateskew}, and connects the rank of a skew Vandermonde matrix with the underlying P-closed set. This result is essentially a reformulation of skew polynomial Lagrange interpolation as in Theorem \ref{th lagrange interpolation}.

\begin{proposition}[\textbf{\cite{multivariateskew}}] \label{prop rank vandermonde}
Let $ \mathcal{G} \subseteq \mathbb{F}^n $ be a finite set with $ M $ elements, and define $ \Omega = \overline{\mathcal{G}} $. If $ \mathcal{M}_M \subseteq \mathcal{N} \subseteq \mathcal{M} $, with notation as in Definition \ref{def vandermonde skew}, then
$$ {\rm Rk} \left( V_{\mathcal{N}}(\mathcal{G}) \right) = {\rm Rk}(\Omega) . $$
\end{proposition}

Note that, when $ \sigma = {\rm Id} $ and $ \delta = 0 $, skew Vandermonde matrices as above recover classical multivariate Vandermonde matrices. However, in the univariate case $ n = 1 $, they do not recover matrices such as Moore or Wronskian matrices. We now define \textit{linearized (multivariate) Vandermonde matrices}, which in the univariate case $ n = 1 $ do recover Moore and Wronskian matrices. In the multivariate case, they provide multivariate generalizations of such matrices.

\begin{definition} [\textbf{Linearized Vandermonde matrices}] \label{def lin vandermonde matrices}
Let $ \mathcal{N} \subseteq \mathcal{M} $ be a finite set of skew monomials. Let $ \mathbf{a}_1, \mathbf{a}_2, \ldots, \mathbf{a}_\ell \in \mathbb{F}^n $ and $ \mathcal{B}_i^\mathcal{D} = \{ \beta_1^{(i)}, \beta_2^{(i)}, \ldots, \beta_{M_i}^{(i)} \} \subseteq \mathbb{F}^* $, for $ i = 1,2,\ldots, \ell $. Denote $ \mathbf{a} = (\mathbf{a}_1, \mathbf{a}_2, \ldots, \mathbf{a}_\ell) $, $ \mathcal{B}^\mathcal{D} = (\mathcal{B}_1^\mathcal{D}, \mathcal{B}_2^\mathcal{D}, \ldots, \mathcal{B}_\ell^\mathcal{D}) $ and $ M = M_1 + M_2 + \cdots + M_\ell $. We define the linearized Vandermonde matrix $ V^\mathcal{D}_\mathcal{N}(\mathbf{a}, \mathcal{B}^\mathcal{D}) $ as the $ | \mathcal{N} | \times M $ matrix formed by the rows
$$ \left( \mathcal{D}^\mathfrak{m}_{\mathbf{a}_1}(\mathcal{B}_1^\mathcal{D}) , \mathcal{D}^\mathfrak{m}_{\mathbf{a}_2}(\mathcal{B}_2^\mathcal{D}), \ldots, \mathcal{D}^\mathfrak{m}_{\mathbf{a}_\ell}(\mathcal{B}_\ell^\mathcal{D}) \right) \in \mathbb{F}^M, $$
for all $ \mathfrak{m} \in \mathcal{N} $ (given certain ordering in $ \mathcal{N} $), where we define
$$ \mathcal{D}^\mathfrak{m}_{\mathbf{a}_i}(\mathcal{B}_i^\mathcal{D}) = \left( \mathcal{D}^\mathfrak{m}_{\mathbf{a}_i}(\beta_1^{(i)}) , \mathcal{D}^\mathfrak{m}_{\mathbf{a}_i}(\beta_2^{(i)}), \ldots, \mathcal{D}^\mathfrak{m}_{\mathbf{a}_i}(\beta_{M_i}^{(i)}) \right) \in \mathbb{F}^{M_i}, $$
for $ i = 1,2, \ldots, \ell $. 
\end{definition}

By rewriting Theorem \ref{theorem evaluation of skew as map}, we may easily connect skew Vandermonde matrices (Definition \ref{def vandermonde skew}) and linearized Vandermonde matrices (Definition \ref{def lin vandermonde matrices}).

\begin{theorem} \label{th transforming skew to lin vandermonde}
With notation as in Definition \ref{def lin vandermonde matrices}, define $ \mathcal{B} = \{ \mathbf{b}_1, \mathbf{b}_2, \ldots, \mathbf{b}_M \} \subseteq \mathbb{F}^n $ by
\begin{equation}
\mathbf{b}_j^{(i)} = \mathbf{a}_i^{\beta_j^{(i)}} = \mathcal{D}_{\mathbf{a}_i}(\beta_j^{(i)}) (\beta_j^{(i)})^{-1} , 
\label{eq transforming basis to P-basis}
\end{equation}
where $ \mathbf{b}_j^{(i)} = \mathbf{b}_r $ and $ r = M_1 + M_2 + \cdots + M_{i-1} + j $, for $ j = 1, $ $2, $ $ \ldots, M_i $ and $ i = 1,2, \ldots, \ell $. Then, for any finite set of skew monomials $ \mathcal{N} \subseteq \mathcal{M} $, it holds that
\begin{equation}
V^\mathcal{D}_\mathcal{N}(\mathbf{a}, \mathcal{B}^\mathcal{D}) = V_\mathcal{N}(\mathcal{B}) \cdot {\rm diag} \left( \beta_1^{(1)}, \beta_2^{(1)}, \ldots , \beta_{M_\ell}^{(\ell)} \right) ,
\label{eq transforming skew to lin vandermonde matrix}
\end{equation}
using the same ordering of $ \mathcal{N} $ to order the rows in $ V_\mathcal{N}(\mathcal{B}) $ and $ V^\mathcal{D}_\mathcal{N}(\mathbf{a}, \mathcal{B}^\mathcal{D}) $.
\end{theorem}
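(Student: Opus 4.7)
The plan is to verify equation (\ref{eq transforming skew to lin vandermonde matrix}) entry by entry. Both sides are $|\mathcal{N}| \times M$ matrices indexed by a monomial $\mathfrak{m} \in \mathcal{N}$ on the rows and by a column index $r = 1,2,\ldots,M$ that, via $r = M_1 + \cdots + M_{i-1} + j$, corresponds to a unique pair $(i,j)$ with $i \in \{1,\ldots,\ell\}$ and $j \in \{1,\ldots,M_i\}$. For such an index $r$ the column of the diagonal matrix on the right-hand side of (\ref{eq transforming skew to lin vandermonde matrix}) has the single non-zero entry $\beta_j^{(i)}$, so the $r$th column of the product $V_\mathcal{N}(\mathcal{B})\,\mathrm{diag}(\beta_1^{(1)},\ldots,\beta_{M_\ell}^{(\ell)})$ is obtained simply by right-multiplying the $r$th column of $V_\mathcal{N}(\mathcal{B})$ by $\beta_j^{(i)}$.

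Next I would unpack the entries explicitly. By Definition \ref{def vandermonde skew}, the $(\mathfrak{m},r)$ entry of $V_\mathcal{N}(\mathcal{B})$ is $N_\mathfrak{m}(\mathbf{b}_r) = N_\mathfrak{m}\bigl(\mathbf{b}_j^{(i)}\bigr) = N_\mathfrak{m}\bigl(\mathbf{a}_i^{\beta_j^{(i)}}\bigr)$ thanks to the hypothesis (\ref{eq transforming basis to P-basis}). Hence the $(\mathfrak{m},r)$ entry of the product on the right-hand side of (\ref{eq transforming skew to lin vandermonde matrix}) is
\begin{equation*}
N_\mathfrak{m}\bigl(\mathbf{a}_i^{\beta_j^{(i)}}\bigr)\,\beta_j^{(i)}.
\end{equation*}
On the other hand, by Definition \ref{def lin vandermonde matrices}, the $(\mathfrak{m},r)$ entry of $V^\mathcal{D}_\mathcal{N}(\mathbf{a},\mathcal{B}^\mathcal{D})$ is $\mathcal{D}^\mathfrak{m}_{\mathbf{a}_i}\bigl(\beta_j^{(i)}\bigr)$. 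So the identity (\ref{eq transforming skew to lin vandermonde matrix}) reduces to showing
\begin{equation*}
\mathcal{D}^\mathfrak{m}_{\mathbf{a}_i}\bigl(\beta_j^{(i)}\bigr) \;=\; N_\mathfrak{m}\bigl(\mathbf{a}_i^{\beta_j^{(i)}}\bigr)\,\beta_j^{(i)},
\end{equation*}
for each monomial $\mathfrak{m} \in \mathcal{N}$, each $i = 1,\ldots,\ell$ and each $j = 1,\ldots,M_i$.

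Finally, this last identity is exactly equation (\ref{eq what to prove for N_i}) of Theorem \ref{theorem evaluation of skew as operator}, applied with $\mathbf{a} = \mathbf{a}_i$ and $\beta = \beta_j^{(i)}$, after right-multiplying both sides of (\ref{eq what to prove for N_i}) by $\beta_j^{(i)}$. Thus the proof is essentially a bookkeeping exercise: unwind the definitions of both Vandermonde matrices, notice that the diagonal matrix converts the norms $N_\mathfrak{m}(\mathbf{a}_i^{\beta_j^{(i)}})$ into the operator values $\mathcal{D}^\mathfrak{m}_{\mathbf{a}_i}(\beta_j^{(i)})$, and cite Theorem \ref{theorem evaluation of skew as operator} to conclude. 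The only mild subtlety, which is not really an obstacle, is making sure that the column-wise indexing scheme $(i,j) \leftrightarrow r$ is consistent between the two sides; since this is fixed by the statement itself, no further work is required.
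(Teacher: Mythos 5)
Your proposal is correct and follows exactly the route the paper intends: the paper gives no separate proof, describing the theorem as a direct rewriting of Theorem \ref{theorem evaluation of skew as operator}, and your entry-by-entry verification (reducing the $(\mathfrak{m},r)$ entry identity to $\mathcal{D}^\mathfrak{m}_{\mathbf{a}_i}(\beta_j^{(i)}) = N_\mathfrak{m}(\mathbf{a}_i^{\beta_j^{(i)}})\,\beta_j^{(i)}$, i.e.\ equation (\ref{eq what to prove for N_i}) multiplied on the right by $\beta_j^{(i)}$) is precisely that rewriting made explicit.
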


The following is the second main result of this section. It combines Proposition \ref{prop rank vandermonde} and Theorems \ref{theorem linearized version of P-closed in one conj} and \ref{theorem lin P-closed sets in several conj classes} to find the rank of skew and linearized Vandermonde matrices.

\begin{theorem} \label{th rank of lin vandermonde matrix}
Let the notation be as in Definition \ref{def lin vandermonde matrices} and Theorem \ref{th transforming skew to lin vandermonde}. Assume moreover that $ \mathbf{a}_1, \mathbf{a}_2, \ldots, \mathbf{a}_\ell \in \mathbb{F}^n $ are pair-wise non-conjugate and $ \mathcal{M}_{|\mathcal{B}|} \subseteq \mathcal{N} \subseteq \mathcal{M} $. Then 
$$ {\rm Rk}(V^\mathcal{D}_\mathcal{N}(\mathbf{a}, \mathcal{B}^\mathcal{D})) = {\rm Rk}(V_\mathcal{N}(\mathcal{B})) = \sum_{i=1}^\ell \dim^R_{K_{\mathbf{a}_i}} \left( \left\langle \beta_1^{(i)}, \beta_2^{(i)}, \ldots, \beta_{M_i}^{(i)} \right\rangle^R_{K_{\mathbf{a}_i}} \right) . $$
\end{theorem}

Theorem \ref{th rank of lin vandermonde matrix} above recovers several well-known particular results. 

Consider the univariate case $ n = 1 $. When $ \delta = 0 $, $ \ell = 1 $ and $ a_1 = 1 $, Theorem \ref{th rank of lin vandermonde matrix} says that the rank of a classical Moore matrix equals the dimension of the vector space generated by the evaluation points over $ K_1 = \{ \beta \in \mathbb{F} \mid \sigma(\beta) = \beta \} $. Analogously, when $ \sigma = {\rm Id} $, $ \ell = 1 $ and $ a_1 = 0 $, Theorem \ref{th rank of lin vandermonde matrix} says that the rank of a classical Wronskian matrix equals the dimension of the vector space generated by the evaluation points over $ K_0 = \{ \beta \in \mathbb{F} \mid \delta(\beta) = 0 \} $. In this generality, we recover Amitsur's result \cite[Th. 2]{amitsur-diff} on the existence of a linear differential equation with a given vector space of solutions.

Now consider the multivariate case $ n \geq 1 $. When $ \sigma = {\rm Id} $ and $ \delta = 0 $, Theorem \ref{th rank of lin vandermonde matrix} states that a conventional multivariate Vandermonde matrix is right invertible if it contains all monomials of degree less than the size of the evaluation set. When $ \sigma = {\rm Id} $, $ \ell = 1 $ and $ \mathbf{a}_1 = \mathbf{0} $, Theorem \ref{th rank of lin vandermonde matrix} above recovers Roth's characterization of invertible multivariate Wronskian matrices \cite[Lemma 1]{roth-wronskian}. See also \cite[Lemma D.11]{singer}.

These particular cases of Theorem \ref{th rank of lin vandermonde matrix} are recovered thanks to linearized evaluations as in Definition \ref{def lin evaluation}, whereas they are not recovered if we only consider remainder-based evaluations (Definition \ref{def skew evaluation}). This is because multivariate Moore and Wronskian matrices are particular cases of linearized Vandermonde matrices (Definition \ref{def lin vandermonde matrices}), but they are not particular cases of skew Vandermonde matrices (Definition \ref{def vandermonde skew}).

\section{Generalizations of Galois-theoretic results} \label{sec generalizations galois and derivations}

In this section, we define P-Galois extensions of division rings, which recover classical Galois extensions of fields. We rephrase and extend Corollaries \ref{cor dimensions for artin}, \ref{cor canonical ring isomorphisms in one conjugacy class} and \ref{cor first version hilbert 90} so that we are able to generalize, respectively, three important results in (finite) Galois theory: Artin's Theorem, the Galois correspondence and Hilbert's Theorem 90. 

In \cite[Sec. VII-5]{jacobson-structure}, Jacobson defines Galois extensions of division rings as those where the division subring is the set of fixed elements of a finite group of automorphisms of the larger division ring. We use the same idea in the following definition.

\begin{definition} [\textbf{P-Galois extensions}] \label{def P-Galois}
Given a division ring $ \mathbb{F} $ and one of its division subrings $ K \subseteq \mathbb{F} $, we say that the pair $ K \subseteq \mathbb{F} $ is a P-Galois extension if there exists a positive integer $ n $, a ring morphism $ \sigma : \mathbb{F} \longrightarrow \mathbb{F}^{n \times n} $, a $ \sigma $-derivation $ \delta : \mathbb{F} \longrightarrow \mathbb{F}^n $ and a point $ \mathbf{a} \in \mathbb{F}^n $, such that $ K = K_\mathbf{a} $ is a centralizer with respect to the skew polynomial ring $ \mathbb{F}[\mathbf{x}; \sigma, \delta] $ as in Definition \ref{def centralizer}. We say that the extension $ K \subseteq \mathbb{F} $ is finite if the right dimension of $ \mathbb{F} $ over $ K $ is finite.
\end{definition}

If $ \mathcal{G} $ is a finite group of ring automorphisms of $ \mathbb{F} $ generated by $ \sigma_1, \sigma_2, \ldots, $ $ \sigma_n $, then Definition \ref{def P-Galois} recovers (finite) Galois extensions $ K_\mathbf{a} \subseteq \mathbb{F} $ by choosing $ \sigma = {\rm diag}(\sigma_1, \sigma_2, $ $ \ldots, $ $ \sigma_n) $, $ \delta = 0 $ and $ \mathbf{a} = \mathbf{1} $. Definition \ref{def P-Galois} includes further cases by Example \ref{ex wild example II}.

In this sense, we may rewrite Corollary \ref{cor dimensions for artin} to extend Artin's Theorem \cite[Th. 14]{artin-lectures}.

\begin{theorem} \label{th artin P-Galois}
A P-Galois extension $ K_\mathbf{a} \subseteq \mathbb{F} $ is finite if, and only if, $ \mathbb{F}[\mathcal{D}_\mathbf{a}] $ has finite left dimension over $ \mathbb{F} $, in which case the latter dimension coincides with $ \dim^R_{K_\mathbf{a}}(\mathbb{F}) $. In particular, $ K_\mathbf{a} \subseteq \mathbb{F} $ is finite if the set $ \mathcal{D}_\mathbf{a}^\mathcal{M} = \{ \mathcal{D}_\mathbf{a}^\mathfrak{m} \mid \mathfrak{m} \in \mathcal{M} \} $ is finite, and 
\begin{equation}
\dim^R_{K_\mathbf{a}}(\mathbb{F}) \leq \left| \mathcal{D}_\mathbf{a}^\mathcal{M} \right|.
\label{eq upper bound artin theorem general}
\end{equation}
\end{theorem}

Artin's Theorem is recovered in the case where $ \mathbb{F} $ is commutative, $ \mathcal{G} $ is a finite group of ring automorphisms of $ \mathbb{F} $ generated by $ \sigma_1, \sigma_2, \ldots, \sigma_n $, $ \sigma = {\rm diag}(\sigma_1, \sigma_2, $ $ \ldots, $ $ \sigma_n) $, $ \delta = 0 $ and $ \mathbf{a} = \mathbf{1} $. In that case, we have that $ \mathcal{D}_\mathbf{a}^\mathcal{M} = \mathcal{G} $ and Artin's Theorem states that $ \dim_K(\mathbb{F}) = |\mathcal{G}| $, where $ K $ is the subfield of $ \mathbb{F} $ of elements fixed by $ \mathcal{G} $. If $ \mathbb{F} $ is not commutative, Theorem \ref{th artin P-Galois} above states that $ \dim^R_K(\mathbb{F}) \leq |\mathcal{G}| $. In such a case, $ \dim^R_K(\mathbb{F}) $ equals the \textit{reduced order} of $ \mathcal{G} $, which may be smaller than $ |\mathcal{G}| $. See \cite[Sec. VII-5]{jacobson-structure}.

We now extend the classical Galois correspondence \cite[Th. 16]{artin-lectures}.

\begin{theorem} \label{th P-galois correspondence}
Let $ \mathbf{a}, \mathbf{b} \in \mathbb{F}^n $. If $ \mathbb{F}[\mathcal{D}_\mathbf{a}] = \mathbb{F}[\mathcal{D}_\mathbf{b}] $, then $ K_\mathbf{a} = K_\mathbf{b} $. Conversely, if $ K_\mathbf{a} = K_\mathbf{b} $ and $ K_\mathbf{a} \subseteq \mathbb{F} $ is finite, then $ \mathbb{F}[\mathcal{D}_\mathbf{a}] = \mathbb{F}[\mathcal{D}_\mathbf{b}] $. 
\end{theorem}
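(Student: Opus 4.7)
The plan is to prove the two implications separately, each reducing to a result already established in the paper. For the forward direction, I would simply invoke Proposition~\ref{prop centralizer is largest for linearity}, which intrinsically characterizes $K_\mathbf{a}$ as the largest division subring of $\mathbb{F}$ over which every element of $\mathbb{F}[\mathcal{D}_\mathbf{a}]$ is right linear. Since this characterization depends only on the underlying set $\mathbb{F}[\mathcal{D}_\mathbf{a}]$, the equality $\mathbb{F}[\mathcal{D}_\mathbf{a}] = \mathbb{F}[\mathcal{D}_\mathbf{b}]$ forces $K_\mathbf{a} = K_\mathbf{b}$ with no calculation.

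For the converse, I would set $K = K_\mathbf{a} = K_\mathbf{b}$ and $M = \dim^R_K(\mathbb{F})$, which is finite by hypothesis. The strategy is a dimension squeeze inside the larger ring $\mathcal{E} = \operatorname{End}^R_K(\mathbb{F})$ of right $K$-linear endomorphisms of $\mathbb{F}$, viewed as a left $\mathbb{F}$-module via post-multiplication of scalars. By Lemma~\ref{lemma lin pols are right linear}, both $\mathbb{F}[\mathcal{D}_\mathbf{a}]$ and $\mathbb{F}[\mathcal{D}_\mathbf{b}]$ are left $\mathbb{F}$-submodules of $\mathcal{E}$.

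Next I would fix an ordered right $K$-basis $\boldsymbol\beta = (\beta_1, \beta_2, \ldots, \beta_M)$ of $\mathbb{F}$ and observe that the evaluation map $f \mapsto (f(\beta_1), \ldots, f(\beta_M))$ is a left $\mathbb{F}$-linear isomorphism between $\mathcal{E}$ and $\mathbb{F}^M$, so $\dim^L_\mathbb{F}(\mathcal{E}) = M$. On the other hand, Corollary~\ref{cor dimensions for artin} yields $\dim^L_\mathbb{F}(\mathbb{F}[\mathcal{D}_\mathbf{a}]) = \dim^R_K(\mathbb{F}) = M$, and symmetrically $\dim^L_\mathbb{F}(\mathbb{F}[\mathcal{D}_\mathbf{b}]) = M$. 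Since both algebras are left $\mathbb{F}$-submodules of $\mathcal{E}$ of the same finite left $\mathbb{F}$-dimension as $\mathcal{E}$ itself, each of them must coincide with $\mathcal{E}$, and hence with one another.

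There is no genuine obstacle in this argument: the finite P-Galois hypothesis is exactly what is needed to force both linearized polynomial rings to exhaust the full endomorphism ring $\operatorname{End}^R_K(\mathbb{F})$. The only point requiring a moment of care is verifying that evaluation at a right $K$-basis is left $\mathbb{F}$-linear, which holds because left scalar multiplication of maps commutes with evaluation; this is routine.
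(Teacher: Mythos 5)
Your proof is correct, and the converse direction takes a genuinely different route from the paper. The paper proves the converse by invoking Corollary \ref{cor canonical ring isomorphisms in one conjugacy class}: it takes the two ring isomorphisms $ \mu_{\boldsymbol\beta} \circ E^L_{\boldsymbol\beta} : \mathbb{F}[\mathcal{D}_\mathbf{a}] \longrightarrow K^{m \times m} $ and $ \mu_{\boldsymbol\beta} \circ E^L_{\boldsymbol\beta} : \mathbb{F}[\mathcal{D}_\mathbf{b}] \longrightarrow K^{m \times m} $, composes them to get an isomorphism $ \psi : \mathbb{F}[\mathcal{D}_\mathbf{a}] \longrightarrow \mathbb{F}[\mathcal{D}_\mathbf{b}] $, and then argues that $ \psi $ is the identity because $ F^{\mathcal{D}_\mathbf{a}} $ and $ \psi(F^{\mathcal{D}_\mathbf{a}}) $ are right $ K $-linear maps agreeing on the ordered right basis $ \boldsymbol\beta $. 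You avoid that corollary entirely: you embed both rings as left $ \mathbb{F} $-subspaces of $ \mathcal{E} = \operatorname{End}^R_K(\mathbb{F}) $ (legitimate by Lemma \ref{lemma lin pols are right linear} and the definition of $ \mathbb{F}[\mathcal{D}_\mathbf{a}] $ as a left span of operators), note that evaluation at a right $ K $-basis gives $ \dim^L_\mathbb{F}(\mathcal{E}) = M $, and then use the dimension formula of Corollary \ref{cor dimensions for artin} to squeeze: each subspace has left dimension $ M $ inside an $ M $-dimensional space, so each equals $ \mathcal{E} $. The two arguments rest on the same underlying machinery (right $ K $-linearity plus the Artin-type dimension count that also underlies Corollary \ref{cor canonical ring isomorphisms in one conjugacy class}), but yours is slightly more economical and proves the stronger, Jacobson--Bourbaki-flavoured statement that for a finite P-Galois extension $ \mathbb{F}[\mathcal{D}_\mathbf{a}] $ is the \emph{full} ring $ \operatorname{End}^R_{K_\mathbf{a}}(\mathbb{F}) $, whereas the paper's version reuses the already-established matrix-algebra isomorphism and only concludes equality of the two linearized polynomial rings. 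The forward direction is identical in both proofs (Proposition \ref{prop centralizer is largest for linearity}).
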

\begin{proof}
First, if $ \mathbb{F}[\mathcal{D}_\mathbf{a}] = \mathbb{F}[\mathcal{D}_\mathbf{b}] $, then $ K_\mathbf{a} = K_\mathbf{b} $ by Proposition \ref{prop centralizer is largest for linearity}. We now prove the reversed implication. Let $ M = \dim^R_{K_\mathbf{a}}(\mathbb{F}) < \infty $, denote $ K = K_\mathbf{a} = K_\mathbf{b} $ and let $ \boldsymbol\beta = (\beta_1, \beta_2, \ldots, \beta_M) \in \mathbb{F}^M $ be an ordered right basis of $ \mathbb{F} $ over $ K $. By Corollary \ref{cor canonical ring isomorphisms in one conjugacy class}, 
$$ \mu_{\boldsymbol\beta} \circ E^L_{\boldsymbol\beta} : \mathbb{F}[\mathcal{D}_\mathbf{a}] \longrightarrow K^{M \times M} \quad \textrm{and} \quad \mu_{\boldsymbol\beta} \circ E^L_{\boldsymbol\beta} : \mathbb{F}[\mathcal{D}_\mathbf{b}] \longrightarrow K^{M \times M} $$
are ring isomorphisms. Thus we have a ring isomorphism
$$ \psi = (E^L_{\boldsymbol\beta})^{-1} \circ E^L_{\boldsymbol\beta} : \mathbb{F}[\mathcal{D}_\mathbf{a}] \longrightarrow \mathbb{F}[\mathcal{D}_\mathbf{b}], $$
where the domain of $ E^L_{\boldsymbol\beta} $ is $ \mathbb{F}[\mathcal{D}_\mathbf{a}] $, and the domain of $ (E^L_{\boldsymbol\beta})^{-1} $ is $ \mathbb{F}[\mathcal{D}_\mathbf{b}] $. However, if $ F^{\mathcal{D}_\mathbf{a}} \in \mathbb{F}[\mathcal{D}_\mathbf{a}] $ and $ G^{\mathcal{D}_\mathbf{b}} = \psi (F^{\mathcal{D}_\mathbf{a}}) \in \mathbb{F}[\mathcal{D}_\mathbf{b}] $, then 
$$ E^L_{\boldsymbol\beta}(F^{\mathcal{D}_\mathbf{a}}) = E^L_{\boldsymbol\beta}(G^{\mathcal{D}_\mathbf{b}}) \in \mathbb{F}^M. $$
Since $ \boldsymbol\beta $ is an ordered right basis of $ \mathbb{F} $ over $ K $, by linearity it must hold that $ F^{\mathcal{D}_\mathbf{a}} = G^{\mathcal{D}_\mathbf{b}} $. Therefore $ \psi $ is the identity and we deduce that $ \mathbb{F}[\mathcal{D}_\mathbf{a}] = \mathbb{F}[\mathcal{D}_\mathbf{b}] $, as desired.
\end{proof}

For $ \mathbf{a} \in \mathbb{F}^n $, define its support as $ I = {\rm Supp}(\mathbf{a}) = \{ i \in \{ 1,2, \ldots, n \} \mid a_i \neq 0 \} $, and let $ \mathcal{H}_I $ be the subgroup of $ \mathcal{G} $ generated by $ \{ \sigma_i \mid i \in I \} $. Then
\begin{equation*}
\begin{split}
K_\mathbf{a} & = \mathbb{F}^{\mathcal{H}_I} = \{ \beta \in \mathbb{F} \mid \tau(\beta) = \beta, \forall \tau \in \mathcal{H}_I \}, \textrm{ and} \\
\mathbb{F}[\mathcal{D}_\mathbf{a}] & = \mathbb{F}[\mathcal{H}_I] = \left\lbrace \sum_{ \tau \in \mathcal{H}_I} F_\tau \tau \mid F_\tau \in \mathbb{F}, \forall \tau \in \mathcal{H}_I \right\rbrace .
\end{split}
\end{equation*}
That is, $ K_\mathbf{a} $ and $ \mathbb{F}[\mathcal{D}_\mathbf{a}] $ are, respectively, the subfield of elements fixed by $ \mathcal{H}_I $ and the group ring of $ \mathcal{H}_I $ over $ \mathbb{F} $. Hence, if we set $ \mathcal{G} = \{ \sigma_1, \sigma_2, \ldots, \sigma_n \} $, then Theorem \ref{th P-galois correspondence} recovers Galois' original correspondence \cite[Th. 16]{artin-lectures}. Notice that we may recover Galois' original correspondence by using linearized evaluations (Definition \ref{def lin evaluation}) because they allow us to recover group rings as particular cases. This would not be the case if we only use remainder-based evaluations (Definition \ref{def skew evaluation}).

Finally, we may rewrite Corollary \ref{cor first version hilbert 90} to generalize Hilbert's Theorem 90.

\begin{theorem} \label{th general hilbert 90}
Let $ \mathbf{a}, \mathbf{b} \in \mathbb{F}^n $ be such that $ \mathbb{F} $ has finite right dimension over $ K_\mathbf{a} $. There exists $ \beta \in \mathbb{F}^* $ such that $ \mathbf{b} = \mathcal{D}_\mathbf{a}(\beta) \beta^{-1} $ if, and only if, $ F(\mathbf{b}) = 0 $, for all $ F \in I(\mathcal{C}(\mathbf{a})) $.
\end{theorem}

To recover the original theorems, we explore the generators of $ I(\mathcal{C}(\mathbf{a})) $.

\begin{lemma} \label{lemma preliminary finite Noether eqs}
Let $ \mathbf{a} \in \mathbb{F}^n $ be such that $ \mathbb{F} $ has finite right dimension over $ K_\mathbf{a} $. By Corollary \ref{cor dimensions for artin}, there is a finite set $ \mathcal{N} \subseteq \mathcal{M} $ such that $ \mathcal{D}_\mathbf{a}^\mathcal{N} $ is a left basis of $ \mathbb{F}[\mathcal{D}_\mathbf{a}] $. The set
$$ \mathcal{N}^c = \left\lbrace x_i \mathfrak{n} \in \mathcal{M} \mid 1 \leq i \leq n, \mathfrak{n} \in \mathcal{N} \cup \{1\}, x_i \mathfrak{n} \notin \mathcal{N} \right\rbrace $$
is the smallest subset $ \mathcal{N}^\prime \subseteq \mathcal{M} \setminus \mathcal{N} $ such that, if $ \mathfrak{m} \in \mathcal{M} \setminus \mathcal{N} $, then there exist $ \mathfrak{m}^\prime \in \mathcal{M} $ and $ \mathfrak{n}^\prime \in \mathcal{N}^\prime $ with $ \mathfrak{m} = \mathfrak{m}^\prime \mathfrak{n}^\prime $. Furthermore, there exist $ F^{\mathfrak{n}^\prime}_\mathfrak{n} \in \mathbb{F} $, for $ \mathfrak{n} \in \mathcal{N} $ and $ \mathfrak{n}^\prime \in \mathcal{N}^c $, such that
\begin{equation}
I(\mathcal{C}(\mathbf{a})) = \left( \left\lbrace \mathfrak{n}^\prime - \sum_{\mathfrak{n} \in \mathcal{N}} F^{\mathfrak{n}^\prime}_\mathfrak{n} \mathfrak{n} \in \mathbb{F}[\mathbf{x}; \sigma,\delta] \mid \mathfrak{n}^\prime \in \mathcal{N}^c \right\rbrace \right) .
\label{eq ideal for hilbert90 with concrete generators}
\end{equation}
Finally, it holds that $ | \mathcal{N}^c | \leq n (|\mathcal{N}| + 1) $, where equality may be attained.
\end{lemma}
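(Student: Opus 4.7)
The plan is to split the lemma into three pieces: the combinatorial statement characterising $\mathcal{N}^c$, the ideal-generation statement, and the cardinality bound. As a preliminary, I would choose $\mathcal{N}$ so that $1\in\mathcal{N}$ and $\mathcal{N}$ is \emph{suffix-closed}, meaning that $\mathfrak{m}_1\mathfrak{m}_2\in\mathcal{N}$ implies $\mathfrak{m}_2\in\mathcal{N}$. This can be achieved by a breadth-first construction starting from $\{1\}$ and only adjoining $x_i\mathfrak{m}$ (for $\mathfrak{m}$ already in $\mathcal{N}$) whenever $\mathcal{D}_\mathbf{a}^{x_i\mathfrak{m}}$ is left-$\mathbb{F}$-independent of what has been chosen. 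The process terminates by finiteness of $\dim_\mathbb{F}^L(\mathbb{F}[\mathcal{D}_\mathbf{a}])$ (Corollary \ref{cor dimensions for artin}), and the commutation rule (\ref{eq defining rule of delta operator}) together with an induction on degree shows that every $\mathcal{D}_\mathbf{a}^{\mathfrak{m}'}$ lies in the left $\mathbb{F}$-span of $\mathcal{D}_\mathbf{a}^\mathcal{N}$, so the resulting $\mathcal{N}$ indexes a basis.

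For the cover property of $\mathcal{N}^c$, I would take $\mathfrak{m}=x_{i_1}\cdots x_{i_k}\in\mathcal{M}\setminus\mathcal{N}$, consider its suffixes $\mathfrak{m}_j=x_{i_j}\cdots x_{i_k}$ (with $\mathfrak{m}_{k+1}=1$), and let $j^{\ast}$ be the largest index for which $\mathfrak{m}_{j^{\ast}}\notin\mathcal{N}$. Then $\mathfrak{m}_{j^{\ast}+1}\in\mathcal{N}\cup\{1\}$ and $\mathfrak{m}_{j^{\ast}}=x_{i_{j^{\ast}}}\mathfrak{m}_{j^{\ast}+1}\in\mathcal{N}^c$, whence $\mathfrak{m}=(x_{i_1}\cdots x_{i_{j^{\ast}-1}})\mathfrak{m}_{j^{\ast}}$. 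Conversely, given any $\mathcal{N}'\subseteq\mathcal{M}\setminus\mathcal{N}$ with the cover property and any $x_i\mathfrak{n}\in\mathcal{N}^c$, a factorisation $x_i\mathfrak{n}=\mathfrak{m}'\mathfrak{n}'$ with $\mathfrak{n}'\in\mathcal{N}'$ forces $\mathfrak{n}'$ to be a suffix of $x_i\mathfrak{n}$; suffix-closure of $\mathcal{N}$ puts every suffix of $\mathfrak{n}\in\mathcal{N}\cup\{1\}$ into $\mathcal{N}$, so the requirement $\mathfrak{n}'\notin\mathcal{N}$ leaves only $\mathfrak{n}'=x_i\mathfrak{n}$. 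Hence $\mathcal{N}^c\subseteq\mathcal{N}'$, proving minimality.

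For the ideal statement, Corollary \ref{cor dimensions for artin} and Theorem \ref{th skew pol is composition} provide the left $\mathbb{F}$-algebra isomorphism $\phi_\mathbf{a}:\mathbb{F}[\mathbf{x};\sigma,\delta]/I(\mathcal{C}(\mathbf{a}))\to\mathbb{F}[\mathcal{D}_\mathbf{a}]$. The basis property of $\mathcal{D}_\mathbf{a}^\mathcal{N}$ then yields unique $F^{\mathfrak{n}'}_\mathfrak{n}\in\mathbb{F}$ with $\mathcal{D}_\mathbf{a}^{\mathfrak{n}'}=\sum_{\mathfrak{n}\in\mathcal{N}} F^{\mathfrak{n}'}_\mathfrak{n}\mathcal{D}_\mathbf{a}^\mathfrak{n}$, so $R_{\mathfrak{n}'}:=\mathfrak{n}'-\sum_\mathfrak{n} F^{\mathfrak{n}'}_\mathfrak{n}\mathfrak{n}\in I(\mathcal{C}(\mathbf{a}))$. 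Writing $J:=(\{R_{\mathfrak{n}'}:\mathfrak{n}'\in\mathcal{N}^c\})\subseteq I(\mathcal{C}(\mathbf{a}))$, the reverse inclusion will follow from a dimension count: once I prove that the left $\mathbb{F}$-submodule $V:=\langle\mathfrak{n}+J:\mathfrak{n}\in\mathcal{N}\rangle^L$ equals $\mathbb{F}[\mathbf{x};\sigma,\delta]/J$, the induced surjection onto $\mathbb{F}[\mathbf{x};\sigma,\delta]/I(\mathcal{C}(\mathbf{a}))$ must be an isomorphism by comparing $\mathbb{F}$-dimensions against Corollary \ref{cor inverse projection map}. To establish $V=\mathbb{F}[\mathbf{x};\sigma,\delta]/J$, I would show that $V$ is stable under left multiplication by each $x_i$: the rule $x_ia=\sum_j\sigma_{i,j}(a)x_j+\delta_i(a)$ rewrites $x_iv$ as an $\mathbb{F}$-linear combination of terms $x_j\mathfrak{n}+J$ and $\mathfrak{n}+J$ with $\mathfrak{n}\in\mathcal{N}$, and each $x_j\mathfrak{n}+J$ lies in $V$ because $x_j\mathfrak{n}$ is either itself a basis element in $\mathcal{N}$ or an element of $\mathcal{N}^c$ that reduces modulo $J$ to $\sum_\mathfrak{p} F^{x_j\mathfrak{n}}_\mathfrak{p}\mathfrak{p}$. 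Combined with $1+J\in V$ and induction on $\deg(\mathfrak{m})$, this gives $\mathfrak{m}+J\in V$ for every $\mathfrak{m}\in\mathcal{M}$.

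Finally, the bound $|\mathcal{N}^c|\le n(|\mathcal{N}|+1)$ is immediate from $\mathcal{N}^c\subseteq\{x_i\mathfrak{n}:1\le i\le n,\ \mathfrak{n}\in\mathcal{N}\cup\{1\}\}$, with sharpness illustrated by examples in which all these products are distinct and lie outside $\mathcal{N}$. The main delicate point is ensuring $\mathcal{N}$ is suffix-closed: without it the minimality of $\mathcal{N}^c$ can fail (as one sees already from toy choices like $\mathcal{N}=\{1,x^2\}$ in the univariate case), and the breadth-first construction is the cleanest way around this pitfall.
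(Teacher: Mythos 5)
Your handling of the ideal identity is essentially the paper's own argument: you take the coefficients $F^{\mathfrak{n}'}_{\mathfrak{n}}$ from the expansion of $\mathcal{D}_\mathbf{a}^{\mathfrak{n}'}$ in the basis $\mathcal{D}_\mathbf{a}^{\mathcal{N}}$, use the isomorphism of Theorem \ref{th skew pol is composition} (equivalently Theorem \ref{theorem evaluation of skew as operator}) to place the generators $R_{\mathfrak{n}'}$ in $I(\mathcal{C}(\mathbf{a}))$, and then force $J=I(\mathcal{C}(\mathbf{a}))$ by comparing left $\mathbb{F}$-dimensions of the two quotients via Corollaries \ref{cor inverse projection map} and \ref{cor dimensions for artin}. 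The paper does exactly this and dismisses the spanning step ($\dim^L_\mathbb{F}(\mathbb{F}[\mathbf{x};\sigma,\delta]/J)\leq|\mathcal{N}|$) as easy; your verification that the left span of $\{\mathfrak{n}+J:\mathfrak{n}\in\mathcal{N}\}$ is stable under left multiplication by each $x_i$, plus induction on degree from $1+J$, is a genuine and correct filling-in of that step, and it is also the place where the cover property of $\mathcal{N}^c$ is actually used. Where you depart from the paper is the combinatorial part: the lemma is stated for an \emph{arbitrary} minimal $\mathcal{N}$ indexing a left basis, and the paper asserts the minimality of $\mathcal{N}^c$ without argument, whereas you build a specific suffix-closed $\mathcal{N}$ containing $1$ and prove minimality only for it. Your caution is warranted: for a general minimal $\mathcal{N}$ the "smallest" claim can fail, and your toy choice $\mathcal{N}=\{1,x^2\}$ is realizable (e.g.\ over $\mathbb{F}_4$ with $\sigma$ the Frobenius, $a=1$ and the inner $\sigma$-derivation $\delta(\beta)=\gamma\beta-\sigma(\beta)\gamma$, one gets $\mathcal{D}^2=\mathcal{D}$, so $\{{\rm Id},\mathcal{D}^{x^2}\}$ is a left basis, $\mathcal{N}^c=\{x,x^3\}$, yet $\{x\}$ already covers $\mathcal{M}\setminus\mathcal{N}$). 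So the suffix-closure hypothesis is a real amendment, not a cosmetic normalization.

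Two points to make explicit if you write this up. First, be clear that you are proving a corrected variant of the statement: your own suffix argument (shortest suffix outside $\mathcal{N}$) establishes the cover property of $\mathcal{N}^c$ for \emph{any} $\mathcal{N}$, and the ideal identity and the bound need only $1\in\mathcal{N}$, so the extra hypothesis is used solely for the minimality of $\mathcal{N}^c$; saying this separates what survives of the lemma as stated from what needs the amendment. Second, your sharpness remark is off for your own construction: once $1\in\mathcal{N}$ we have $\mathcal{N}\cup\{1\}=\mathcal{N}$, hence $|\mathcal{N}^c|\leq n|\mathcal{N}|<n(|\mathcal{N}|+1)$, so equality in the stated bound can never be attained for a suffix-closed $\mathcal{N}$ containing $1$; attaining it requires choices with $1\notin\mathcal{N}$ (e.g.\ $\mathcal{N}=\{x^2\}$ with $\sigma={\rm Id}$, $\delta=0$, $a=1$), precisely the kind of choice for which the minimality claim breaks. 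Either drop the sharpness claim or attribute it to such choices.
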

\begin{proof}
The minimality of $ \mathcal{N}^c $ and the upper bound on its size are easy to see. Since $ \mathcal{D}_\mathbf{a}^\mathcal{N} $ is a left basis of $ \mathbb{F}[\mathcal{D}_\mathbf{a}] $, there exist $ F^{\mathfrak{m}}_\mathfrak{n} \in \mathbb{F} $ such that
\begin{equation}
\mathcal{D}_\mathbf{a}^{\mathfrak{m}} = \sum_{\mathfrak{n} \in \mathcal{N}} F^{\mathfrak{m}}_\mathfrak{n} \mathcal{D}_\mathbf{a}^\mathfrak{n},
\label{eq defining property of F_n^nprime}
\end{equation}
for $ \mathfrak{m} \in \mathcal{M} $ and $ \mathfrak{n} \in \mathcal{N} $, where $ F^{\mathfrak{m}}_\mathfrak{n} = \delta_{\mathfrak{m}, \mathfrak{n}} $ if $ \mathfrak{m} \in \mathcal{N} $. Let now $ I \subseteq \mathbb{F}[\mathbf{x}; \sigma, \delta] $ be the left ideal on the right-hand side of (\ref{eq ideal for hilbert90 with concrete generators}). Using Theorem \ref{theorem evaluation of skew as map}, it follows from (\ref{eq defining property of F_n^nprime}) that $ I \subseteq I(\mathcal{C}(\mathbf{a})) $. Therefore, there exists a canonical surjective left linear map
$$ \rho : \mathbb{F}[\mathbf{x}; \sigma, \delta] / I \longrightarrow \mathbb{F}[\mathbf{x}; \sigma, \delta] / I(\mathcal{C}(\mathbf{a})). $$
Now, it is easy to see that $ \dim^L_\mathbb{F}(\mathbb{F}[\mathbf{x}; \sigma, \delta] / I) = |\mathcal{N}| = |\mathcal{D}_\mathbf{a}^\mathcal{N}| $. Since $ |\mathcal{D}_\mathbf{a}^\mathcal{N}| = \dim^L_\mathbb{F}(\mathbb{F}[\mathcal{D}_\mathbf{a}]) = \dim^L_\mathbb{F}(\mathbb{F}[\mathbf{x}; \sigma, \delta] / I(\mathcal{C}(\mathbf{a}))) $ by Corollary \ref{cor dimensions for artin}, we conclude that $ \rho $ is a left vector space isomorphism. Hence $ I = I(\mathcal{C}(\mathbf{a})) $ and we are done.
\end{proof}

Therefore we may obtain a strengthening of Theorem \ref{th general hilbert 90} as follows.

\begin{theorem} \label{th general hilbert 90 finite generators}
Let $ \mathbf{a}, \mathbf{b} \in \mathbb{F}^n $ be such that $ \mathbb{F} $ has finite right dimension over $ K_\mathbf{a} $. Let $ \mathcal{N}, \mathcal{N}^c \subseteq \mathcal{M} $ be as in Lemma \ref{lemma preliminary finite Noether eqs}. There exists $ \beta \in \mathbb{F}^* $ such that $ \mathbf{b} = \mathcal{D}_\mathbf{a}(\beta) \beta^{-1} $ if, and only if, $ \mathbf{b} \in \mathbb{F}^n $ satisfies the following no more than $ n (|\mathcal{N}| + 1) $ equations:
$$ N_{\mathfrak{n}^\prime}(\mathbf{b}) = \sum_{\mathfrak{n} \in \mathcal{N}} F^{\mathfrak{n}^\prime}_\mathfrak{n} N_\mathfrak{n}(\mathbf{b}), \quad \textrm{for all } \mathfrak{n}^\prime \in \mathcal{N}^c. $$
\end{theorem}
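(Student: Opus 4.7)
The proof is essentially a three-ingredient assembly: combine the general Hilbert 90 of Theorem \ref{th general hilbert 90} with the explicit finite generating set of $I(\mathcal{C}(\mathbf{a}))$ from Lemma \ref{lemma preliminary finite Noether eqs}, and then translate the condition ``$F(\mathbf{b})=0$ for every generator'' into the stated Noether equations via the product rule and the definition of the norms.

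First, by Theorem \ref{th general hilbert 90}, the hypothesis $\dim^R_{K_\mathbf{a}}(\mathbb{F}) < \infty$ ensures that $\mathbf{b} = \mathcal{D}_\mathbf{a}(\beta)\beta^{-1}$ for some $\beta \in \mathbb{F}^*$ if and only if $F(\mathbf{b}) = 0$ for every $F \in I(\mathcal{C}(\mathbf{a}))$. So the task reduces to showing that this infinite family of conditions collapses to the finite family indexed by $\mathcal{N}^c$.

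Next, set
$$G_{\mathfrak{n}'} = \mathfrak{n}' - \sum_{\mathfrak{n} \in \mathcal{N}} F^{\mathfrak{n}'}_{\mathfrak{n}} \mathfrak{n} \in \mathbb{F}[\mathbf{x};\sigma,\delta],$$
for $\mathfrak{n}' \in \mathcal{N}^c$. Lemma \ref{lemma preliminary finite Noether eqs} gives
$$I(\mathcal{C}(\mathbf{a})) = (\{G_{\mathfrak{n}'} \mid \mathfrak{n}' \in \mathcal{N}^c\}),$$
so every $F \in I(\mathcal{C}(\mathbf{a}))$ can be written as a finite sum $F = \sum_{\mathfrak{n}' \in \mathcal{N}^c} H_{\mathfrak{n}'} G_{\mathfrak{n}'}$ with $H_{\mathfrak{n}'} \in \mathbb{F}[\mathbf{x};\sigma,\delta]$. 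If $G_{\mathfrak{n}'}(\mathbf{b}) = 0$ for all $\mathfrak{n}' \in \mathcal{N}^c$, then the product rule (Theorem \ref{th product rule}) forces $(H_{\mathfrak{n}'} G_{\mathfrak{n}'})(\mathbf{b}) = 0$ for every $\mathfrak{n}'$, and left $\mathbb{F}$-linearity of skew evaluation yields $F(\mathbf{b}) = 0$. The converse is trivial since $G_{\mathfrak{n}'} \in I(\mathcal{C}(\mathbf{a}))$.

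Finally, applying left $\mathbb{F}$-linearity of skew evaluation to $G_{\mathfrak{n}'}$ and using $N_\mathfrak{m}(\mathbf{b}) = E^S_\mathbf{b}(\mathfrak{m})$ from Lemma \ref{lemma multivariate norms}, one has
$$G_{\mathfrak{n}'}(\mathbf{b}) = N_{\mathfrak{n}'}(\mathbf{b}) - \sum_{\mathfrak{n} \in \mathcal{N}} F^{\mathfrak{n}'}_{\mathfrak{n}} N_{\mathfrak{n}}(\mathbf{b}),$$
so $G_{\mathfrak{n}'}(\mathbf{b}) = 0$ is precisely the displayed Noether equation indexed by $\mathfrak{n}'$. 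Combining these three observations with the bound $|\mathcal{N}^c| \leq n(|\mathcal{N}|+1)$ from Lemma \ref{lemma preliminary finite Noether eqs} concludes the proof. The only non-trivial step is the product-rule reduction, which is already packaged for us in Theorem \ref{th product rule}; beyond that, the argument is purely bookkeeping.
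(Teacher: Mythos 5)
Your proof is correct and follows exactly the route the paper intends: the paper states this theorem without an explicit proof precisely because it is the combination of Theorem \ref{th general hilbert 90} with the finite generating set of $I(\mathcal{C}(\mathbf{a}))$ from Lemma \ref{lemma preliminary finite Noether eqs}, plus the standard reduction (left linearity of skew evaluation and the product rule, Theorem \ref{th product rule}) showing that vanishing at $\mathbf{b}$ of the generators forces vanishing of the whole left ideal. Your write-up just makes those implicit steps explicit, so there is nothing to add.
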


Theorem \ref{th general hilbert 90 finite generators} recovers Noether's extension of Hilbert's Theorem 90 \cite{noether} in the case of classical Galois extensions. If $ \mathbb{F} $ is commutative, $ \mathcal{G} $ is a finite group of field automorphisms of $ \mathbb{F} $ generated by $ \sigma_1, \sigma_2, \ldots, \sigma_n $, $ K $ is the subfield of elements of $ \mathbb{F} $ fixed by $ \mathcal{G} $, $ \sigma = {\rm diag}(\sigma_1, \sigma_2, \ldots, \sigma_n) $, $ \delta = 0 $ and $ \mathbf{a} = \mathbf{1} $, then Theorem \ref{th general hilbert 90 finite generators} reads as follows.

\begin{corollary} [\textbf{\cite{noether}}] \label{cor noether hilbert90}
Let $ K \subseteq \mathbb{F} $ be a finite Galois extension of fields with Galois group $ \mathcal{G} $ generated by $ \sigma_1, \sigma_2, \ldots, \sigma_n $. For $ \mathbf{b} = (b_1, b_2, \ldots, b_n) \in (\mathbb{F}^*)^n $, there exists $ \beta \in \mathbb{F}^* $ such that $ b_i = \sigma_i(\beta) \beta^{-1} $, for all $ i = 1,2, \ldots, n $, if and only if, $ N_\mathfrak{m} (\mathbf{b}) = N_\mathfrak{n}(\mathbf{b}) $, whenever $ \mathfrak{m}(\sigma) = \mathfrak{n}(\sigma) $, the symbolic evaluations of $ \mathfrak{m}, \mathfrak{n} \in \mathcal{M} $ at $ (\sigma_1, \sigma_2, \ldots, \sigma_n) $, respectively.
\end{corollary}

Finally, we recover the original Theorem 90 for cyclic Galois extensions \cite[Th. 90]{hilbert}.

\begin{corollary} [\textbf{\cite{hilbert}}] \label{cor original hilbert90}
Let $ K \subseteq \mathbb{F} $ be a finite Galois extension of fields with Galois group $ \mathcal{G} $ generated by $ \sigma $. For $ b \in \mathbb{F}^* $, there exists $ \beta \in \mathbb{F}^* $ such that $ b = \sigma(\beta) \beta^{-1} $ if, and only if, $ N_{\mathbb{F}/K}(b) = \sigma^{m-1}(b) \sigma^{m-2}(b) \cdots \sigma(b) b = 1 $, where $ m = \dim_K(\mathbb{F}) $.
\end{corollary}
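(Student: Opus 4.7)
The plan is to derive this classical result as a direct specialization of Noether's version of Hilbert 90 (Corollary \ref{cor noether hilbert90}) to the cyclic case. First I would apply that corollary with $n=1$ and $\sigma_1 = \sigma$, working inside the univariate skew polynomial ring $\mathbb{F}[x; \sigma, 0]$, for which $K = K_1$ is precisely the fixed subfield of $\mathcal{G}$ as already observed in Example \ref{ex Galois extensions classical}.

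The next step is to identify an appropriate finite set of defining relations $\mathcal{N}_\mathcal{R}$ of the cyclic group $\mathcal{G} = \langle \sigma \rangle$. Combining the hypothesis of cyclicity with Artin's Theorem (Corollary \ref{cor artin's theorem}), the order of $\mathcal{G}$ equals $|\mathcal{G}| = \dim_K(\mathbb{F}) = m$, so $\mathcal{G}$ admits the standard presentation $\langle \sigma \mid \sigma^m = {\rm Id} \rangle$. Consequently I can take $\mathcal{N}_\mathcal{R} = \{ x^m - 1 \} \subseteq \mathbb{F}[x; \sigma, 0]$ as the finite set of relations defining $\mathcal{G}$ required by Corollary \ref{cor noether hilbert90}.

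With these choices, Corollary \ref{cor noether hilbert90} reduces the existence of $\beta \in \mathbb{F}^*$ with $b = \sigma(\beta)\beta^{-1}$ to the single Noether equation $N_{x^m}(b) = N_1(b)$. A short telescoping computation based on the recursion in Lemma \ref{lemma multivariate norms} (specialized to $n = 1$, $\delta = 0$) gives $N_1(b) = 1$ and $N_{x^m}(b) = \sigma^{m-1}(b) \sigma^{m-2}(b) \cdots \sigma(b) b$, so that the equation becomes $\sigma^{m-1}(b) \cdots \sigma(b) b = 1$, which is exactly $N_{\mathbb{F}/K}(b) = 1$.

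I do not foresee a genuine obstacle in this derivation: the essential content of Hilbert 90 has been absorbed into Corollary \ref{cor noether hilbert90} and, through it, into Corollary \ref{cor first version hilbert 90}. The only mildly delicate point is justifying that $\{x^m - 1\}$ really is a sufficient set of defining relations, which amounts to the fact that a finite cyclic group of order $m$ is presented by this single relation together with Artin's equality $|\mathcal{G}| = \dim_K(\mathbb{F})$; both ingredients are already at our disposal.
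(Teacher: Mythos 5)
Your proposal is correct and matches the paper's proof: the paper also derives the statement by specializing Corollary \ref{cor noether hilbert90} with $\sigma_1 = \sigma$ and $\mathcal{N}_\mathcal{R} = \{x_1^m - 1\} \subseteq \mathbb{F}[x_1;\sigma_1,0]$, identifying $N_{\mathbb{F}/K} = N_{x_1^m}$. Your extra remarks (Artin's theorem giving $|\mathcal{G}| = m$ and the telescoping of the norm) only make explicit what the paper leaves implicit.
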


\section*{Acknowledgement}

The author gratefully acknowledges the support from The Independent Research Fund Denmark (Grant No. DFF-7027-00053B).

 
\bibliographystyle{plain}

{
\footnotesize

}

\end{document}